\documentclass[final,onefignum,onetabnum]{siamart171218}
\usepackage{amssymb}
\usepackage{titlesec}
\usepackage{amsmath,amscd}
\usepackage{amsfonts}
\usepackage{graphicx}
\usepackage{ulem}
\usepackage{float}
\usepackage[caption=false]{subfig}
\usepackage{float} 
\usepackage{multirow}
\usepackage{diagbox}
\usepackage{mathrsfs}
\usepackage{indentfirst}
\usepackage{xcolor}
\usepackage{enumitem}
\usepackage{tikz}
\usepackage{etoolbox}
\usepackage{geometry}
\usepackage{fancyhdr}                                
\usepackage{lastpage}    
\usepackage{pifont}                                   
\usepackage{layout}
\usepackage{ulem}
\usepackage[justification=centering]{caption}

\newcommand{\circled}[2][]{\tikz[baseline=(char.base)]
    {\node[shape = circle, draw, inner sep = 1pt]
    (char) {\phantom{\ifblank{#1}{#2}{#1}}};%
    \node at (char.center) {\makebox[0pt][c]{#2}};}}
\robustify{\circled}


\usepackage{lipsum}
\usepackage{amsfonts}
\usepackage{graphicx}
\usepackage{epstopdf}
\usepackage{algorithmic}
\ifpdf
  \DeclareGraphicsExtensions{.eps,.pdf,.png,.jpg}
\else
  \DeclareGraphicsExtensions{.eps}
\fi


\newsiamremark{remark}{Remark}
\newsiamremark{hypothesis}{Hypothesis}
\crefname{hypothesis}{Hypothesis}{Hypotheses}
\newsiamthm{claim}{Claim}

\headers{Accelerated primal dual fixed point algorithm}{Yanan Zhu}

\title{Accelerated primal dual fixed point algorithm\thanks{Submitted to the editors DATE.}}

\author{Ya-Nan Zhu}

\usepackage{amsopn}


\ifpdf
\hypersetup{
  pdftitle={Accelerated primal dual fixed point algprithm},
  pdfauthor={Ya-Nan Zhu}
}
\fi

\begin{document}

\maketitle

\begin{abstract}
This work proposes an Accelerated Primal–Dual Fixed-Point (APDFP) method that employs Nesterov type acceleration to solve composite problems of the form $\min_x\, f(x)+g\circ B(x)$, where $g$ is nonsmooth and $B$ is a linear operator. The APDFP features fully decoupled iterations and can be regarded as a generalization of Nesterov’s accelerated gradient the setting where the $B$  can be non identity matrix.
Theoretically, we improve the convergence rate of the partial primal-dual gap with respect to the Lipschitz constant of gradient of $f$ from $\mathcal{O}(\frac{1}{k})$ to $\mathcal{O}(\frac{1}{k^2})$. Numerical experiments on graph-guided logistic regression and CT image reconstruction are conducted to validate the correctness and demonstrate the efficiency of the proposed method.
\end{abstract}

\begin{keywords} 
Nesterov acceleration, primal dual fixed point method
\end{keywords}

\section{Introduction.}
In this work, we consider the following structured convex optimization problem:
\begin{equation}\label{model}
\min_{x \in \mathbb{R}^d}~F(x) := f(x) + g\circ B(x),
\end{equation}
where $f: \mathbb{R}^d \to \mathbb{R}\cup \{+\infty\}$ is a proper smooth convex function with an $L_f$-Lipschitz continuous gradient, $g: \mathbb{R}^r \to \mathbb{R} \cup \{+\infty\}$ is a proper, convex, and lower semi-continuous (l.s.c.) function that may notbe differentiable, and $B \in \mathbb{R}^{r \times d}$ is a matrix. The composite nature of the objective function $F(x)$ makes it well-suited for a broad class of applications such as graph-guided fused lasso~\cite{GL1,GL2}, low-rank matrix completion~\cite{lowrank}, total variation-based image reconstruction~\cite{TV, CP, PDHG, PDHGm}, sparse coding~\cite{sc1,sc2}, compressed sensing~\cite{cs1,cs2,cs3}, etc.
Given the potentially large-scale nature of problem~\eqref{model}, efficient optimization methods are essential. In particular, first-order proximal splitting algorithms have gained widespread popularity due to their scalability, simplicity, and strong theoretical guarantees~\cite{Bookbeck,Booklan,condatreview,Booknesterov}. These methods are particularly appealing when the proximal operators of $f$ or $g$ (or their Fenchel conjugates) admit closed-form solutions or can be computed efficiently.Over the past decade, considerable progress has been made in designing first-order splitting algorithms for problem~\eqref{model}. Below, we briefly review some representative algorithms.
\subsection{$B = I$.}
We now turn to a special case of problem~\eqref{model}, namely the composite minimization problem
\begin{equation}\label{model2}
\min_{x \in \mathbb{R}^d}~F(x) := f(x) + g(x),
\end{equation}
which corresponds to setting $B = I$ in \textbf{(\ref{model})}.
A widely used approach for solving~\eqref{model2} is the Proximal Gradient Descent (PGD) method, also known as the proximal forward–backward splitting algorithm~\cite{PGD}. As summarized in Algorithm~\ref{PGD}, PGD updates the iterate by first performing a gradient descent step on $f$, followed by applying the proximity operator of $\gamma g$:
\begin{equation}
\mathrm{Prox}_{\gamma g}(s) = \arg\min_{y \in \mathbb{R}^d} \left\{ \gamma g(y) + \frac{1}{2}\|y - s\|_2^2 \right\},
\end{equation}
For the problems we are concern, we assume its proximity operator of $g$ has closed-form solution. 
\begin{algorithm*}[htbp]
\caption{\textbf{Proximal Gradient Descent (PGD)}}
\label{PGD}
\begin{algorithmic}[1]
\REQUIRE Initial point $x_1 \in \mathbb{R}^d$, step size $\gamma_k > 0$, and maximum iteration number $K$.
\FOR{$k = 1,2,\dots,K$}
\STATE $x_{k+1} = \mathrm{Prox}_{\gamma_k g}( x_k - \gamma_k\nabla f(x_k) )$
\ENDFOR
\ENSURE $x_{K + 1}$
\end{algorithmic}
\end{algorithm*}
Thanks to its simplicity and scalability, PGD has found widespread applications in large-scale data analysis, signal recovery, and image reconstruction. Theoretically, PGD has $\mathcal{O}(1/k)$ convergence rate in terms of the function value, where $k$ denotes the iteration counter. Nevertheless, in practice PGD often converges slowly, particularly for ill-conditioned problems. This drawback has motivated the development of various acceleration techniques.
One of the most influential acceleration methods is the Fast Iterative Shrinkage-Thresholding Algorithm (FISTA, Algorithm~\ref{FISTA}) proposed by Beck and Teboulle~\cite{FISTA}, which is inspired by Nesterov’s seminal work on accelerated gradient methods~\cite{Nes83}. Compared to PGD, FISTA introduces an extrapolation step that leverages both the current and previous iterates. This seemingly simple modification improves the convergence rate of PGD to $\mathcal{O}(1/k^2)$ in terms of the function value, which is optimal for smooth convex problems, and has made FISTA one of the most widely adopted first-order methods in large-scale optimization.
\begin{algorithm*}[htbp]
\caption{\textbf{Fast Iterative Shrinkage-Thresholding Algorithm (FISTA)}}
\label{FISTA}
\begin{algorithmic}[1]
\REQUIRE Initial points $x_0 = x_1 \in \mathbb{R}^d$, step size $\gamma_k > 0$, and maximum iteration number $K$.
\FOR{$k =1,2,\dots,K$}
\STATE $y_k = x_k + \frac{k - 1}{k + 2}(x_k - x_{k-1})$
\STATE $x_{k+1} = \mathrm{Prox}_{\gamma_k g}(y_k - \gamma_k\nabla f(y_k))$
\ENDFOR
\ENSURE $x_{K + 1}$
\end{algorithmic}
\end{algorithm*}
In parallel, Nesterov introduced an alternative acceleration framework (Algorithm~\ref{nes2nd}), commonly referred to as Nesterov’s Accelerated Gradient (Algorithm~\ref{nes2nd})(NAG)~\cite{nesterov1,nesterov2,Nes2nd}. Compared to FISTA, which extrapolates iterates through a single momentum term, NAG maintains two coupled sequences: a primary sequence updated by gradient steps, and an aggregated sequence (the superscript “ag” is used to denote this aggregated sequence) that serves as the search point for gradient evaluation. This sequence construction provides a principled mechanism for generating momentum and yields the optimal $\mathcal{O}(1/k^2)$ convergence rate as FISTA.
\begin{algorithm*}[htbp]
\caption{\textbf{Nesterov's Accelerated Gradient (NAG)}}
\label{nes2nd}
\begin{algorithmic}[1]
\REQUIRE Initial point $x_1 = x_1^{\text{ag}} \in \mathbb{R}^d$, parameters $\gamma_k > 0$, and maximum iteration number $K$.
\FOR{$k = 1,2,\dots,K$}
\STATE $\theta_k = \frac{2}{k + 1}$
\STATE $x_k^{\text{md}} = (1 - \theta_k) x_k^{\text{ag}} + \theta_k x_k$
\STATE $x_{k+1} = \mathrm{Prox}_{\frac{\gamma_k}{\theta_k} g}\left(x_k - \frac{\gamma_k}{\theta_k} \nabla f(x_k^{\text{md}})\right)$
\STATE $x_{k+1}^{\text{ag}} = (1 - \theta_k) x_k^{\text{ag}} + \theta_k x_{k+1}$
\ENDFOR
\ENSURE $x_{K + 1}^{\text{ag}}$
\end{algorithmic}
\end{algorithm*}
\subsection{$B \not = I$.}
When the matrix \( B \) is not the identity, directly applying above algorithms encounters the difficulty of evaluating the proximal operator of the composite function \( g \circ B(\cdot) \). While the proximity operator of \( g \) alone is often admits a closed-form solution, that of the composition \( g \circ B(\cdot) \) is generally not easy and typically necessitates a specialized subroutine. In the following, we summarize several existing strategies to circumvent this problem.
\subsubsection{Primal Dual Hybrid Gradient.}
To bypass the annoying matrix \( B \), one can reformulate problem \textbf{(\ref{model})} as the following saddle-point (min-max) problem:
\begin{equation}\label{minmax}
\min_{x \in \mathbb{R}^d} \max_{y \in Y}~f(x) + \langle Bx, y \rangle - g^*(y),    
\end{equation}
where \( g^* \) denotes the convex conjugate of \( g \), defined on domain $Y$. (Here, we omit some technical assumptions under which problems \textbf{(\ref{model})} and \textbf{(\ref{minmax})} are equivalent; interested readers are referred to \cite{Monotone} for details.)
Under this min-max reformulation, the challenge of evaluating the proximity operator of the composite function \( g \circ B \) is transformed into computing the proximal operator of \( g^* \). Since the proximity operator of \( g \) often admits a closed-form expression, the same typically holds for its convex conjugate \( g^* \), via the application of the Moreau decomposition \cite{Monotone}. One of the most widely used algorithms for solving \textbf{(\ref{minmax})} is the modified Primal-Dual Hybrid Gradient method (PDHGm, Algorithm~\ref{PDHGm}) \cite{PDHG,OriPDHG,PDHGm}, also known as the Chambolle–Pock algorithm (CP) \cite{CP}. PDHGm proceeds by alternating between evaluating the proximity operators of \( f \) and \( g^* \), followed by an extrapolation step on the primal variable \( x \).
Even though PDHGm circumvents the difficulty associated with the proximity operator of \( g \circ B \), computing the proximity operator of \( f \) remains challenging in many practical scenarios. To address this issue, one can exploit the smoothness of \( f \) and fully decouple the iterations by linearizing \( f \), i.e.,
\begin{equation}
\begin{aligned}
x_{k + 1} & = \mathrm{Prox}_{\tau_k f}(x_k - \tau_k B^Ty_{k + 1}) \\   
& = \arg\min_{x \in \mathbb{R}^d}~f(x) + \frac{1}{2\tau_k}\|x - (x_{k} - \tau_k B^Ty_{k + 1})\|_2^2 \\
& = \arg\min_{x \in \mathbb{R}^d}~f(x) + \langle x, B^Ty_{k + 1}\rangle + \frac{1}{2\tau_k} \|x - x_k\|_2^2 \\
Linearization & \Rightarrow \\
x_{k + 1}
& = \arg\min_{x \in \mathbb{R}^r}~f(x_k) + \langle \nabla f(x_k), x - x_k\rangle + \frac{1}{2\eta_k} \|x - x_k\|_2^2 + \langle x, B^Ty_{k + 1}\rangle  \\
& \qquad + \frac{1}{2\tau_k} \|x - x_k\|_2^2 \\
\Rightarrow
x_{k + 1} & = x_k - \gamma_k \nabla f(x_k) - \gamma_k B^T y_{k + 1}
\end{aligned}    
\end{equation}
where $\gamma_k = \frac{\eta_k\tau_k}{\eta_k + \tau_k}$.
\begin{algorithm*}[htbp]
\caption{\textbf{Modified Primal Dual Hybrid Gradient (PDHGm/CP)}}
\label{PDHGm}
\begin{algorithmic}[1]
\REQUIRE Choose $x_1,\overline{x}_1 \in \mathbb{R}^d$, $y_1 \in \mathbb{R}^r$ and proper positive parameters  $\sigma_k, \tau_k, \alpha_k$.
\STATE \textbf{For} $k = 1,2,\cdots,K$ 
\STATE \qquad $y_{k + 1} = \mathrm{Prox}_{\sigma_k g^*}(\sigma_kB\overline{x}_k + y_k)$
\STATE \qquad $x_{k + 1} = \mathrm{Prox}_{\tau_k f}(x_k - \tau_k B^Ty_{k + 1})$ 
\STATE \qquad $\overline{x}_{k+1} = x_{k + 1} + \alpha_k(x_{k + 1} - x_k)$
\STATE \textbf{End For} 
\ENSURE $x_{K + 1}$.
\end{algorithmic}
\end{algorithm*}
The resulting algorithm is known as the linearized PDHGm (LPDHGm, Algorithm~\ref{LPDHGm}) \cite{APD}. It is not difficult to establish an ergodic convergence rate of \(\mathcal{O}(L_f/k + \|B\|_2/k)\) for the partial primal-dual gap of the LPDHGm \cite{APD}. Although this rate matches that of the original PDHGm, its dependence on \(L_f\) is suboptimal. This suboptimal rate arises from the fact that the update of $x_{k + 1}$ in LPDHGm essentially performs a single PGD step. To improve the rate, it is natural to incorporate Nesterov’s acceleration into $x_{k + 1}$-update, thereby improving the convergence rate from $\mathcal{O}(1/k)$ to $\mathcal{O}(1/k^2)$. Such an improvement is particularly beneficial in ill-posed problems frequently encountered data science and imaging applications, where the Lipschitz constant \(L_f\) is often much larger than \(\|B\|_2\). These considerations motivate the development of an accelerated variant of LPDHGm, termed APD (Algorithm~\ref{APD},\cite{APD}). Theoretically, APD achieves an enhanced convergence rate of \(\mathcal{O}(L_f/k^2 + \|B\|_2/k)\).
\begin{algorithm*}
\caption{\textbf{Linearized Modified Primal Dual Hybrid Gradient (LPDHGm)}}
\label{LPDHGm}
\begin{algorithmic}[1]
\REQUIRE Choose $x_1,\overline{x}_1 \in \mathbb{R}^d$, $y_1 \in \mathbb{R}^r$ and proper positive parameters  $0 < \alpha_k = \sigma_{k - 1}/\sigma_k = \gamma_{k - 1}/\gamma_{k} \leq 1,~L_f\sigma_k + \|B \|_2^2\gamma_k\sigma_k \leq 1 $.
\STATE \textbf{For} $k = 1,2,\cdots,K$ 
\STATE \qquad $y_{k + 1} = \mathrm{Prox}_{\sigma_k g^*}(\sigma_kB\overline{x}_k + y_k)$
\STATE \qquad $x_{k + 1} = x_k - \gamma_k\nabla f(x_k) - \gamma_k B^Ty_{k + 1}$
\STATE \qquad $\overline{x}_{k+1} = x_{k + 1} + \alpha_{k + 1}(x_{k + 1} - x_k)$
\STATE \textbf{End For} 
\ENSURE $x_{K + 1}$.
\end{algorithmic}
\end{algorithm*}
\begin{algorithm*}
\caption{\textbf{Accelerated linearized modified Primal Dual Hybrid Gradient (APD)}}
\label{APD}
\begin{algorithmic}[1]
\REQUIRE Choose $x_1 = x_1^{\text{ag}},\overline{x}_1 \in \mathbb{R}^d$, $y_1 \in \mathbb{R}^r$ and proper positive parameters $C > 0, \tau_k = C/\|B\|_2,\gamma_k = k/(2L_f + k\|B\|C), \alpha_{k + 1} = k/(k + 1)$.
\STATE \textbf{For} $k = 1,2,\cdots,K$ 
\STATE \qquad $\theta_k = \frac{2}{k + 1}$
\STATE \qquad $x_k^{\text{md}} = (1 - \theta_k) x_k^{\text{ag}} + \theta_k x_k$
\STATE \qquad $y_{k+1} = \mathrm{Prox}_{\tau_k g^*}(\tau_kB\overline{x}_k + y_k)$
\STATE \qquad $x_{k + 1} = x_k - \gamma_k\nabla f(x_k^{\text{md}}) - \gamma_k B^Ty_{k + 1}$
\STATE \qquad $x_{k+1}^{\text{ag}} = (1 - \theta_k) x_k^{\text{ag}} + \theta_k x_{k+1}$
\STATE \qquad $\overline{x}_{k+1} = x_{k + 1} + \alpha_{k + 1}(x_{k + 1} - x_k)$
\STATE \textbf{End For} 
\ENSURE $x_{K + 1}^{ag}$.
\end{algorithmic}
\end{algorithm*}

\subsubsection{Alternating Direction Method of Multipliers.}
Another approach to mitigate the difficulty of computing the proximity operator of $g\circ B(\cdot)$ is to employ the Alternating Direction Method of Multipliers (ADMM) \cite{ADMM1,ADMM2,ADMM3,goldstein2009split}. To enable the application of ADMM, problem \textbf{(\ref{model})} is reformulated as the following equality-constrained optimization problem:
\begin{equation}\label{equality}
\begin{aligned}
& \min_{x \in \mathbb{R}^d}~f(x) + g(z),  \\  
& s.t.,~Bx = z
\end{aligned}
\end{equation}
ADMM solves the reformulated problem \textbf{(\ref{equality})} by performing alternating minimization of the following augmented Lagrangian with respect to the primal and dual variables (Algorithm~\ref{ADMM}).
\begin{equation}
L(x,z,y) = f(x) + g(z) + \langle Bx - z, y\rangle + \frac{\rho}{2}\|Bx - z\|_2^2  
\end{equation}
where $\rho > 0$ is an algorithm parameter. 
\begin{algorithm*}
\caption{\textbf{Alternating Direction Methods of Multiplier (ADMM)}}
\label{ADMM}
\begin{algorithmic}[1]
\REQUIRE Choose $x_1, z_1 = Bx_1 \in \mathbb{R}^d,y_1$, and positive parameters $\rho$.
\STATE \textbf{For} $k = 1,2,\cdots,K$ 
\STATE \qquad $x_{k + 1} = \arg\min_{x \in \mathbb{R}^d} f(x) + \langle Bx, y_k\rangle + \frac{\rho}{2}\|Bx - z_k\|_2^2 $
\STATE \qquad $z_{k + 1} = \mathrm{Prox}_{ g/\rho}(Bx_{k + 1} + y_k/\rho_k)$
\STATE \qquad $y_{k+1} = x_{k + 1} + \rho(Bx_{k + 1} - z_{k + 1})$
\STATE \textbf{End For} 
\ENSURE $x_{K + 1}$.
\end{algorithmic}
\end{algorithm*}
It is worth noting that the first subproblem in ADMM is generally difficult to solve in practice. Similar to the LPDHGm, one can perform linearization on augmented Lagrangian to decouple the iterations. However, unlike LPDHGm, the ADMM framework offers three distinct linearization strategies, each leading to different algorithmic variants:
\begin{enumerate}
    \item Linearize function $f(x)$: Linearize function $f$ will transfer the subproblem to the following 
    \begin{equation}\label{LADMMeq}
    \begin{aligned}    
    & x_{k + 1}  = \arg\min_x ~f(x) + \langle Bx, y_k\rangle + \frac{\rho}{2}\|Bx - z_k\|_2^2 \\
    & Linearize~ f(x) \Rightarrow \\
    & x_{k + 1}  = \arg\min_x ~ f(x_k) + \langle \nabla f(x_k),x - x_k\rangle + \frac{1}{2\gamma_k}\|x - x_k\|_2^2 + \langle Bx, y_k\rangle + \frac{\rho}{2}\|Bx - z_k\|_2^2 \\
    \Leftrightarrow
    & (I + \gamma_k\rho B^TB)x_{k + 1} = x_k - \gamma_k (\nabla f(x_k) + B^Ty_k - \rho B^Tz_k) 
    \end{aligned}
    \end{equation}
    Replace the first subproblem by \textbf{(\ref{LADMMeq})}, the resulting algorithm is called linearized ADMM (LADMM) \cite{LADMM1,LADMM2,LADMM3}. It is observed that the LADMM still need solving a linear system of equations.
    \item Linearize augmented term $\frac{\rho}{2}\|Bx - z\|_2^2$: Linearize $\frac{\rho}{2}\|Bx - z\|_2^2$ will transfer the subproblem to the following 
    \begin{equation}\label{PADMMeq}
    \begin{aligned} 
     x_{k + 1}  &= \arg\min_x ~f(x) + \langle Bx, y_k\rangle + \frac{\rho}{2}\|Bx - z_k\|_2^2 \\
    & Linearize~ \frac{\rho}{2}\|Bx - z\|_2^2 \Rightarrow \\
    x_{k + 1} & = \arg\min_x ~  f(x) + \langle Bx, y_k\rangle + \rho_k\langle Bx_k - z_k, Bx\rangle  + \frac{1}{2\gamma_k}\|x - x_k\|_2^2 \\
    & = \arg\min_x ~  f(x) + \langle Bx, y_k\rangle + \rho_k\langle Bx - z_k, Bx\rangle + \rho_k\langle Bx_k - Bx, Bx\rangle + \frac{1}{2\gamma_k}\|x - x_k\|_2^2 \\
    & = \arg\min_x ~  f(x) + \langle Bx, y_k\rangle + \frac{\rho_k}{2}\| Bx - z_k\|_2^2  + \frac{1}{2\gamma_k}\|x - x_k\|_{I - \gamma_k\rho_k BB^T}^2 \\
    & = \mathrm{Prox}_{\gamma_kf}(x_k - \gamma_k(B^Ty_k + \rho_k B^T(Bx_k - z_k)))
    \end{aligned}
    \end{equation}
    It is observed from the third equation of \textbf{(\ref{PADMMeq})} that linearizing the augmented term is equivalent to add a quadratic term with generalized norm induced by the matrix $I - \gamma_k\rho_k BB^T$. The technique is called the precondition and the resulting algorithm is called preconditioned ADMM (PADMM) \cite{CP,APD}. Generally, the proximity operator of $f$ is not easy, and the update of $x_{k + 1}$ still require solving subproblem.
    \item Linearize $f(x) + \frac{\rho}{2}\|Bx - z\|_2^2$: Even applied the linearization, the LADMM and PADMM still need subproblem solving. To fully decouple the iteration, one can  linearize both $f$ and $\frac{\rho}{2}\|Bx - z\|_2^2$, this gives
    \begin{equation*}
    \begin{aligned}
    x_{k + 1} & = \arg\min_x ~ \langle \nabla f(x_k), x - x_k \rangle + \rho_k\langle Bx_k - z_k, Bx\rangle + \frac{1}{2\gamma_k}\|x - x_k\|_2^2 + \langle Bx, y_k\rangle \\
\Leftrightarrow    x_{k + 1} &= x_k - \gamma_k(\nabla f(x_k) + \rho_k(B^T(Bx_k - z_k)) + B^Ty_k) 
    \end{aligned}
    \end{equation*}
    The resulting algorithm is called Linearized  Precondition ADMM (LP-ADMM) 
\end{enumerate}

\begin{algorithm*}
\caption{\textbf{Linearized Preconditioned  ADMM (LP-ADMM)}}
\label{LPADMM}
\begin{algorithmic}[1]
\REQUIRE Choose $x_1,y_1, z_1 = Bx_1 \in \mathbb{R}^d$ and proper positive $C > 0$. Let $\rho = C/\|B\|_2, \gamma = 1/(L_{f} + \rho \lVert B\rVert^2)$.
\STATE \textbf{For} $k = 1,2,\cdots,K$ 
\STATE \qquad $x_{k + 1} = x_k - \gamma(\nabla f(x_k) + \rho(B^T(Bx_k - z_k)) + B^Ty_k) $ 
\STATE \qquad $z_{k + 1} = \mathrm{Prox}_{ g/\rho}(Bx_{k + 1} + y_k/\rho_k)$
\STATE \qquad $y_{k+1} = x_{k + 1} + \rho(Bx_{k + 1} - z_{k + 1})$
\STATE \textbf{End For} 
\ENSURE $x_{K + 1}$.
\end{algorithmic}
\end{algorithm*}
In contrast to LADMM and PADMM, the LP-ADMM fully decouples the iteration without requiring any inner subroutines, thereby offering improved computational efficiency. The ergordic convergence rate of LP-ADMM has been established as \(\mathcal{O}(L_f/k + \|B\|_2/k)\) in terms of the objective error \(F(\overline{x}_k) - F(x^*)\) \cite{AADMM} ($\overline{x}_k = \frac{1}{k}\sum_{i = 1}^k x_i$). However, analogous to LPDHGm, this rate is suboptimal with respect to the Lipschitz constant \(L_f\). To improve the convergence rate, \cite{AADMM} incorporates Nesterov’s acceleration into the update of the primal variable \(x\), yielding an improved convergence rate of \(\mathcal{O}(L_f/k^2 + \|B\|_2/k)\) (in terms of aggregate iterate $x_k^{\text{ag}}$). The resulting algorithm, termed Accelerated ADMM (AADMM) (Algorithm \ref{AADMM}), achieves a provably faster convergence rate while maintaining the fully explicit and decoupled iteration structure of LP-ADMM.
\begin{algorithm*}
\caption{\textbf{Accelerated ADMM (AADMM)}}
\label{AADMM}
\begin{algorithmic}[1]
\REQUIRE Choose $x_1, y_1, z_1 = Bx_1 \in \mathbb{R}^d$, and proper positive $\rho > 0$. Let $\sigma_k = (k - 1)\rho/k, \gamma_k = k/(2/L_{f} + \rho k\|B\|_2^2)$.
\STATE \textbf{For} $k = 1,2,\cdots,K$ 
\STATE \qquad $\theta_k = \frac{2}{k + 1}$
\STATE \qquad $x_k^{\text{md}} = (1 - \theta_k) x_k^{\text{ag}} + \theta_k x_k$
\STATE \qquad $x_{k + 1} = x_k - \gamma_k(\nabla f(x_k) + \sigma_k(B^T(Bx_k - z_k)) + B^Ty_k)$ 
\STATE \qquad $x_{k+1}^{\text{ag}} = (1 - \theta_k) x_k^{\text{ag}} + \theta_k x_{k+1}$ 
\STATE \qquad $z_{k + 1} = \mathrm{Prox}_{ g/\rho}(Bx_{k + 1} + y_k/\rho_k)$
\STATE \qquad $z_{k+1}^{\text{ag}} = (1 - \theta_k) z_k^{\text{ag}} + \theta_k z_{k+1}$ 
\STATE \qquad $y_{k+1} = x_{k + 1} + \rho(Bx_{k + 1} - z_{k + 1})$
\STATE \textbf{End For} 
\ENSURE $x_{K + 1}^{ag}$.
\end{algorithmic}
\end{algorithm*}

\subsubsection{Primal Dual Fixed Point.}
In the previous two subsections, we introduced two approaches to circumvent the computation of the proximity operator of $g \circ B(\cdot)$, namely PDHGm and ADMM, along with their accelerated linearized variants, APD and AADMM. Although these methods benefit from decoupled update rules and improved convergence rates, the linearization introduces additional algorithmic parameters. To guarantee convergence, these parameters must satisfy certain complex conditions, thereby reintroducing a degree of parameter tuning burden in practical applications (APD need to tune constant $C$ and AADMM need to tune $\rho$).  In this subsection, we turn our attention to the primary algorithm of this work—the Primal-Dual Fixed Point (PDFP) method. The derivation of PDFP is similar to that of the PGD, which performs a one-step gradient descent on the function $f$. The key difference lies in the approximation of the proximity operator $\mathrm{Prox}_{\gamma g \circ B}(\cdot)$ using a different algorithm. Specifically, according to the definition of the proximity operator, for a given vector $y$, the operator $\mathrm{Prox}_{\gamma g \circ B}(y)$ is defined as
\begin{equation}\label{prox}
\mathrm{Prox}_{\gamma g\circ B}(y) = \arg\min_{v} \gamma g\circ B(v) + \frac{1}{2}\|v - y\|_2^2  
\end{equation}
At first glance, the difficulty of \textbf{(\ref{prox})} appears similar to that of \textbf{(\ref{model})}. However, the smooth term in \textbf{(\ref{prox})} has the specific form of a squared norm, which makes it easier to design iterative algorithms for solving it. For example, \cite{FP2O} proposed a fixed-point iterative method called Fixed-Point algorithms based on the Proximity Operator (FP${}^2$O) solving \textbf{ (\ref{prox})} as follows:
\begin{itemize}
    \item Performing the following fixed point iteration to obtain the fixed point $v^*$ 
    \begin{equation}
     v_{k + 1} = (I - \mathrm{Prox}_{\frac{\gamma}{\lambda} g})(By + (I - \lambda BB^T)v_k),k = 1,2,\cdots   
    \end{equation}
    where $\lambda$ is a parameter satisfying $0 < \lambda < \frac{1}{\rho_{\max}(BB^T)}$ (here, $\rho_{\max}$ denotes the maximum eigenvalue of the matrix $BB^T$). 
    \item The $\mathrm{Prox}_{\gamma g\circ B}(y)$ is computed by $\mathrm{Prox}_{g\circ B}(y) = y - \lambda B^Tv^*$.
\end{itemize}
If we directly apply FP${}^2$O to compute $\mathrm{Prox}_{\gamma g \circ B}(y)$, it still involves solving a subproblem. To fully decouple the algorithm, it is intuitively natural to perform only a single iteration of FP${}^2$O to approximate $\mathrm{Prox}_{\gamma  g \circ B}(y)$. By combining one step of gradient descent on $f$ with one step of FP${}^2$O for approximating $\mathrm{Prox}_{ \gamma g \circ B}(y)$, we arrive at the PDFP (a.k.a PDFP2O) method \cite{PDFP}.
\begin{algorithm*}
\caption{\textbf{Primal Dual Fixed Point (PDFP)}}
\label{PDFP1}
\begin{algorithmic}[1]
\REQUIRE Choose $x_1,y_1$ and proper positive parameters  $\lambda, \gamma$
\STATE \textbf{For} $k = 1,2,\cdots,K$ 
\STATE \qquad $\overline{x}_{k} = x_k - \gamma\nabla f(x_k)$ 
\STATE \qquad $y_{k+1} = (I - \mathrm{Prox}_{\frac{\gamma}{\lambda}g})(B\overline{x}_{k} + (I - \lambda BB^T)y_k)$ 
\STATE \qquad $x_{k + 1} = \overline{x}_{k} - \lambda B^Ty_{k + 1}$ 
\STATE \textbf{End For} 
\ENSURE $x_{K + 1}$.
\end{algorithmic}
\end{algorithm*}
The PDFP algorithm converges when the step size satisfies $0 < \gamma \leq \frac{2}{L_f}$ and $0 < \lambda < \frac{1}{\rho_{\max}(BB^T)}$. It is observed that the conditions for these two parameters are independently related to the model parameters and do not require satisfying any complex interdependence. In addition, numerical experiments show that the performance of PDFP is relatively insensitive to the choice of $\lambda$. In practice, it is often set to its theoretical upper bound $\frac{1}{\rho_{\max}(BB^T)}$. The only parameter that significantly affects the convergence speed is the step size $\gamma$.  Analogous to gradient-based methods, setting $\gamma = \frac{2}{L_f}$ often yields promising results. This is particularly advantageous in practical applications, as once model \textbf{(\ref{model})} is defined, the optimal parameters for the algorithm can be directly specified without the need for burdensome parameter tuning. Furthermore, when $B = I$ and $\lambda$ is chosen as $1$, it is readily to verify that PDFP can be rewritten as 
\begin{equation}
\begin{aligned}
y_{k + 1} & = (I - \mathrm{Prox}_{\gamma g})(x_k - \gamma \nabla f(x_k))) \\
x_{k + 1} & = x - \gamma \nabla f(x_k) - y_{k + 1} \\
& = \mathrm{Prox}_{\gamma g}(x_k - \gamma \nabla f(x_k))
\end{aligned}
\end{equation}
This is essentially the PGD! Therefore, the PDFP method can be viewed as a generalization of PGD, and in constast to directly apply PGD to model \textbf{(\ref{model})}, PDFP does not require any subproblem solving. By employing the Moreau decomposition \cite{cvxhl} in the update of dual variable $y_{k + 1}$, the update rule of PDFP can be reformulated using the proximity operator of the conjugate function $g^*$ \cite{PDFP, SVRGPDFP} (see Algorithm~\ref{PDFP}). This resemblance recalls the minmax formulation of PDHG discussed earlier. In fact, it has been shown in \cite{SVRGPDFP} that, similar to PDHG, the PDFP method also solves the min-max problem \textbf{(\ref{minmax})}, and the update rule in Algorithm \ref{PDFP} is also called Loris–Verhoeven \cite{LR} or Proximal Alternating Predictor–Corrector (PAPC) \cite{PAPC} algorithm.
\begin{algorithm*}
\caption{\textbf{Primal Dual Fixed Point Method (PDFP/Loris–Verhoeven/PAPC)}}
\label{PDFP}
\begin{algorithmic}[1]
\REQUIRE Choose $x_1, y_1 $ and proper $\lambda, \gamma > 0$.
\STATE \textbf{For} $k = 1,2,\cdots,K$ 
\STATE \qquad $\overline{x}_{k} = x_k - \gamma\nabla f(x_k) -\gamma B^Ty_k$ 
\STATE \qquad $y_{k+1} =\mathrm{Prox}_{\frac{\lambda}{\gamma}g^*}(\frac{\lambda}{\gamma}B\overline{x}_{k} + y_k)$ 
\STATE \qquad $x_{k + 1} = x_k - \gamma\nabla f(x_k) -\gamma B^Ty_{k + 1}$  
\STATE \textbf{End For} 
\ENSURE $x_{K + 1}$.
\end{algorithmic}
\end{algorithm*}
Although the original PDFP paper does not provide a convergence rate under general convexity assumptions, it is not very difficult to prove its ergodic convergence rate of $\mathcal{O}\left(\frac{L_f + \rho_{\max}(I -\lambda BB^T)}{k}\right)$ in terms of the partial primal-dual gap \cite{SVRGPDFP}( provided that $0 < \gamma \leq \frac{1}{L_f}$ and $0 < \lambda \leq \frac{1}{\rho_{\max}(BB^T)}$). However, this rate is suboptimal with respect to its dependence on $L_f$. To improve the convergence speed, \cite{IPDFP} proposes an inertial version of PDFP (IPDFP, see Algorithm \ref{iPDFP}). However, it only guarantees convergence without providing explicit theoretical rates, and the inertial parameter $\alpha_k$ is typically problem-dependent and requires tuning.
\begin{algorithm*}
\caption{\textbf{Inertial Primal Dual Fixed Point Method (IPDFP)}}
\label{iPDFP}
\begin{algorithmic}[1]
\REQUIRE Choose $x_0 = x_1, y_0 = y_1$ and proper positive parameters $\alpha_k,\lambda, \gamma > 0$.
\STATE \textbf{For} $k = 1,2,\cdots,K$ 
\STATE \qquad $z_{k} = x_k + \alpha_k (x_k - x_{k -1})$ 
\STATE \qquad $v_{k} = y_k + \alpha_k (I - \lambda BB^T)(y_k - y_{k -1})$ 
\STATE \qquad $\overline{x}_{k} = z_k - \gamma\nabla f(z_k) -\gamma B^Ty_k$ 
\STATE \qquad $y_{k+1} =\mathrm{Prox}_{\frac{\lambda}{\gamma}g^*}(\frac{\lambda}{\gamma}B\overline{x}_{k} + v_k)$ 
\STATE \qquad $x_{k + 1} = x_k - \gamma\nabla f(x_k) -\gamma B^Ty_{k + 1}$  
\STATE \textbf{End For} 
\ENSURE $x_{K + 1}$.
\end{algorithmic}
\end{algorithm*}
Analogous to the developments of APD and AADMM, this motivates the application of Nesterov-type acceleration techniques to enhance the convergence behavior of PDFP. Since PDFP can be regarded as a generalization of PGD, and is further characterized by its simple update rule and minimal parameter tuning requirements, the study of its accelerated variants is expected not only to unify and extend existing algorithmic frameworks, but also to provide efficient and scalable solutions to a broader class of problems that arise in data science and imaging applications.

\subsection{Organization of the paper.}
The remainder of this paper is organized as follows. Section~\ref{sec:algorithm} introduces the update rule of the APDFP algorithm. In Section~\ref{sec:convergence}, we establish the convergence analysis, presenting the main theorem and deferring the detailed proofs to Appendix~\ref{sec:appendix}. Section~\ref{sec:experiments} reports numerical experiments on graph-guided logistic regression and CT image reconstruction, which validate the correctness and efficiency of APDFP. The paper is concluded in Section~\ref{sec:conclusion}.

\section{Algorithm.}\label{sec:algorithm}
In this section, we present the detailed update rules of the APDFP (Algorithm~\ref{APDFP}). Similar to APD and AADMM, APDFP maintains both the generic iterates $x_k, y_k$ and the aggregated iterates $x_k^{\text{ag}}, y_k^{\text{ag}}$. The iteration begins by computing a convex combination of $x_k$ and $x_k^{\text{ag}}$ using the parameter $\theta_k$, yielding an intermediate point $x_k^{\text{md}}$(here we follow the notation in~\cite{APD}). 
Based on $x_k^{\text{md}}$, a single PDFP step is performed to update the primal and dual variables $x_{k+1}$ and $y_{k+1}$. It is worth noting that, in this update, the step size $\gamma$ in the original PDFP is replaced by $\gamma_k / \theta_k$, and the parameter $\frac{\lambda}{\gamma}$ is replaced by $\frac{\lambda}{\gamma_k}\theta_k$.
Subsequently, the aggregated iterates $x_{k+1}^{\text{ag}}$ and $y_{k+1}^{\text{ag}}$ are updated using the newly computed primal and dual variables. The aggregated dual iterate $y_{k+1}^{\text{ag}}$, although not participate in the iteration process, is used in the convergence analysis. \\
\begin{remark}
\textbf{(Connection with other algorithms).} As previously mentioned, PDFP reduces to the PGD when $B = I$ and $\lambda = 1$. A natural question then arises: what does APDFP reduce to under the same conditions? To answer this, we perform the following calculation. When $B = I$ and $\lambda = 1$, the update for $y_{k+1}$ in the APDFP step of Algorithm~\ref{APDFP} simplifies as follows:
\begin{equation}\label{APDFP_dual}
\begin{aligned}    
y_{k+1} & = \mathrm{Prox}_{\frac{\theta_k}{\gamma_k}g^*}\left(\frac{\theta_k}{\gamma_k}\overline{x}_k + y_k\right)\\
& = \mathrm{Prox}_{\frac{\theta_k}{\gamma_k}g^*}\Big(\frac{\theta_k}{\gamma_k}\big(x_k - \frac{\gamma_k}{\theta_k}\nabla f(x_k^{\text{md}}) - \frac{\gamma_k}{\theta_k} y_k \big) + y_k\Big)\\
& = \mathrm{Prox}_{\frac{\theta_k}{\gamma_k}g^*}\left(\frac{\theta_k}{\gamma_k}\big(x_k - \frac{\gamma_k}{\theta_k}\nabla f(x_k^{\text{md}})\big)\right)\\
\end{aligned}    
\end{equation}
Plug Eq. \textbf{(\ref{APDFP_dual})} into the $x_{k + 1}$ update yields
\begin{equation}\label{APDFP_primal}
\begin{aligned}    
x_{k + 1} & = x_k - \frac{\gamma_k}{\theta_k}\nabla f(x_k^{\text{md}}) - \mathrm{Prox}_{\frac{\theta_k}{\gamma_k}g^*}\left(\frac{\theta_k}{\gamma_k}(x_k - \frac{\gamma_k}{\theta_k}\nabla f(x_k^{\text{md}}))\right) \\
& = \mathrm{Prox}_{\frac{\gamma_k}{\theta_k}g}\left(x_k - \frac{\gamma_k}{\theta_k}\nabla f(x_k^{\text{md}})\right)
\end{aligned}    
\end{equation}
where the second equality uses the Moreau decomposition \cite{cvxhl,PGD,PDHGm}. \\
By combining Steps 2 and 6 of Algorithm~\ref{APDFP}, and noting that $y_{k+1}^{\textit{ag}}$ does not participate in the iteration process, we arrive at the NAG (when the $\theta_k = \frac{2}{k + 1}$, as shown in Corollary \ref{col1}). Therefore, APDFP can be regarded as a generalization of NAG in the presence of a general linear operator $B$.
In contrast to approaches that directly apply NAG to solve problem~\textbf{(\ref{model})}, APDFP avoids the need for computing the proximity operator of the composite function $g \circ B$. Furthermore, it is straightforward to verify that APDFP reduces to the original PDFP algorithm when $\theta_k := 1, \gamma_k := \gamma$. \\
\end{remark}
\begin{remark}
\textbf{(Comparison to APD).} As discussed previously, PDFP addresses the same min-max reformulation as LPDHGm. Therefore, it is natural to compare their respective accelerated variants. From Algorithm~\ref{APD}, we observe that the updates in APDFP are analogous to those in APD, except for the additional extrapolation step (Step 6 in Algorithm~\ref{APD}). Another key distinction lies in the explicit presence of the parameter $\theta_k$ in the update rules for both the primal and dual variables in APDFP.
\begin{algorithm*}
\caption{\textbf{Accelerated Primal Dual Fixed Point Method (APDFP)}}
\label{APDFP}
\begin{algorithmic}[1]
\REQUIRE Choose $x_1 = x_1^{\text{ag}}$, $y_1 = y_1^{\text{ag}}$, $\bar{x}_1 = y_1$ and proper positive parameters $\theta_k,\gamma_k,\lambda$.
\STATE \textbf{For} $k = 1,2,\cdots,K - 1$ 
\STATE \qquad $x_k^{\text{md}} = (1 - \theta_k) x_k^{\text{ag}} + \theta_k x_k$ 
\STATE \qquad $\overline{x}_{k} = x_k - \frac{\gamma_k}{\theta_k}\nabla f(x_k^{\text{md}}) - \frac{\gamma_k}{\theta_k} B^Ty_k$ 
\STATE \qquad $y_{k+1} = \mathrm{Prox}_{\frac{\lambda}{\gamma_k}\theta_kg^*}(\frac{\lambda}{\gamma_k}\theta_kB\overline{x}_k + y_k)$ 
\STATE \qquad $x_{k + 1} = x_k - \frac{\gamma_k}{\theta_k}\nabla f(x_k^{\text{md}}) - \frac{\gamma_k}{\theta_k} B^Ty_{k + 1}$ 
\STATE \qquad $x_{k+1}^{\text{ag}} = (1 - \theta_k) x_k^{\text{ag}} + \theta_k x_{k+1}$ 
\STATE \qquad $y_{k+1}^{\text{ag}} = (1 - \theta_k) y_k^{\text{ag}} + \theta_k y_{k+1}$
\STATE \textbf{End For} 
\ENSURE $x_{K}^{ag}$.
\end{algorithmic}
\end{algorithm*}
\end{remark}

\section{Convergence.}\label{sec:convergence}
In this section, we give the convergence results of APDFP. The general convergence condition is given in Theorem \ref{thm1} and specific parameter selection strategies are provided in the Corollary \ref{col1}.
\begin{definition}\label{gap1}
Let $\tilde{z} = (\tilde{x},\tilde{y})$ and $z = (x,y) \in \mathcal{Z} = \mathbb{R}^d \times Y$ and
\begin{equation}\label{pdg1}
Q(\tilde{z},z) = [f(\tilde{x}) + \langle B\tilde{x},y \rangle - g^*(y)] - [f(x) + \langle Bx, \tilde{y}\rangle - g^*(\tilde{y})].
\end{equation}
Define the partial primal dual gap $\mathcal{G}_{B_1 \times B_2}(\tilde{x},\tilde{y})$ \cite{CP} as the follows
\begin{equation}\label{pdg2}
\begin{aligned}
\mathcal{G}_{B_1 \times B_2}(\tilde{x},\tilde{y})
& = max_{z \in B_1 \times B_2} Q(\tilde{z},z) \\  
& = max_{y \in B_2}[f(\tilde{x}) + \langle B\tilde{x},y \rangle - g^*(y)] - min_{x \in B_1}[f(x) + \langle Bx, \tilde{y}\rangle - g^*(\tilde{y})]\\  
\end{aligned}
\end{equation}
where the $B_1 \times B_2 \subset \mathbb{R}^d \times Y$ are bounded set containing the saddle point of \textbf{(\ref{minmax})}.
\end{definition}
\begin{theorem}\label{thm1}
Suppose the function $f$ is $L_f$ smooth convex function and $g$ is convex Lipchitz continuous. Choose the parameter $0 < \lambda \leq \frac{1}{\rho_{\max}(BB^T)}$,  and select a sequence of parameters $\gamma_k$ in Algorithm \ref{APDFP} such that 
\begin{equation}\label{condition}
\left\{
\begin{aligned}
& 0 < \gamma_k \leq \frac{1}{L_f} \\
& \frac{\gamma_{k + 1}}{\gamma_k} \leq \frac{\theta_{k + 1}^2}{\theta_k^2(1 - \theta_{k + 1})} \\
& \frac{\gamma_k}{\gamma_{k + 1}}\leq \frac{1}{(1 - \theta_{k + 1})} \\
& \frac{\gamma_k^2}{\theta_k^2} \mathrm{~is~ increasing~ and ~uniformly ~ bounded}\\
& \theta_1 = 1
\end{aligned}    
\right.
\end{equation}
we then have
\begin{equation}\label{estimate}
\begin{aligned}
& \mathcal{G}_{B_1 \times B_2}(x_{k+1}^{\text{ag}},y_{k+1}^{\text{ag}}) 
\leq \frac{\theta_k^2}{2\gamma_k}\Omega_{1} 
+\frac{\gamma_k}{2}\frac{\rho_{\max}(I - \lambda BB^T)}{\lambda}\Omega_{2}, \\
\end{aligned}
\end{equation}
where $\Omega_{1},\Omega_{2}$ are constant related to diameter of $B_1,B_2$, respectively. 
\end{theorem}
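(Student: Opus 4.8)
The plan is to bound the map $z\mapsto Q(z_{k+1}^{\text{ag}},z)$ uniformly over $z=(x,y)\in B_1\times B_2$ by the claimed right-hand side, and then take the maximum over $z$ to recover $\mathcal{G}_{B_1\times B_2}(x_{k+1}^{\text{ag}},y_{k+1}^{\text{ag}})$. Fix such a $z$. First I would convert the aggregation rules into a one-step recursion: since $f(\cdot)+\langle B\cdot,y\rangle$ and $g^*$ are convex, substituting $x_{k+1}^{\text{ag}}=(1-\theta_k)x_k^{\text{ag}}+\theta_k x_{k+1}$ and $y_{k+1}^{\text{ag}}=(1-\theta_k)y_k^{\text{ag}}+\theta_k y_{k+1}$ into $Q$ and grouping the $(1-\theta_k)$ pieces gives $Q(z_{k+1}^{\text{ag}},z)\le (1-\theta_k)Q(z_k^{\text{ag}},z)+(\text{new primal and dual contributions at }(x_{k+1},y_{k+1}))$.

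For the primal part I would use the NAG-type bookkeeping. The identity $x_{k+1}^{\text{ag}}-x_k^{\text{md}}=\theta_k(x_{k+1}-x_k)$ together with $L_f$-smoothness gives $f(x_{k+1}^{\text{ag}})\le f(x_k^{\text{md}})+\theta_k\langle\nabla f(x_k^{\text{md}}),x_{k+1}-x_k\rangle+\tfrac{L_f\theta_k^2}{2}\|x_{k+1}-x_k\|^2$, while convexity of $f$ at $x_k^{\text{md}}$, tested against $(1-\theta_k)x_k^{\text{ag}}+\theta_k x$ and using $x_k^{\text{md}}=(1-\theta_k)x_k^{\text{ag}}+\theta_k x_k$ (so the inner vector is $\theta_k(x-x_k)$), controls $f(x_k^{\text{md}})$. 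Eliminating the gradient through the primal step $\nabla f(x_k^{\text{md}})+B^Ty_{k+1}=\tfrac{\theta_k}{\gamma_k}(x_k-x_{k+1})$ via the three-point identity produces a term $\|x_{k+1}-x_k\|^2$ with coefficient $\tfrac{\theta_k^2}{2}(L_f-\tfrac{1}{\gamma_k})\le 0$, precisely because $\gamma_k\le 1/L_f$. Discarding it leaves $f(x_{k+1}^{\text{ag}})-(1-\theta_k)f(x_k^{\text{ag}})-\theta_k f(x)\le \tfrac{\theta_k^2}{2\gamma_k}(\|x_k-x\|^2-\|x_{k+1}-x\|^2)-\theta_k\langle B^Ty_{k+1},x_{k+1}-x\rangle$, and after combining this leftover cross term with the remaining linear and conjugate terms, the new contributions collapse to $\theta_k T_k$ with $T_k=\langle Bx_{k+1},y-y_{k+1}\rangle+g^*(y_{k+1})-g^*(y)$.

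The crux is bounding $T_k$, where the predictor–corrector structure of PDFP is essential. Bounding $g^*(y_{k+1})-g^*(y)$ by the subgradient inequality coming from the optimality of the prox step for $y_{k+1}$ introduces $B\bar x_k$; I would then substitute the corrector identity $\bar x_k=x_{k+1}+\tfrac{\gamma_k}{\theta_k}B^T(y_{k+1}-y_k)$, so that $B\bar x_k$ contributes $Bx_{k+1}$ (cancelling the first term of $T_k$) plus a $\tfrac{\gamma_k}{\theta_k}BB^T(y_{k+1}-y_k)$ term. The decisive computation is that when this $BB^T$ term is combined, via polarization, with the $\tfrac{\gamma_k}{\lambda\theta_k}I$ quadratic produced by the prox step, the isotropic $\tfrac1\lambda I$ parts cancel exactly and what survives is governed by $M:=\tfrac1\lambda I-BB^T$, which is positive semidefinite exactly because $\lambda\le 1/\rho_{\max}(BB^T)$. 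This yields $\theta_k T_k\le \tfrac{\gamma_k}{2}(\|y-y_k\|_M^2-\|y-y_{k+1}\|_M^2)$, and I expect this $\tfrac1\lambda I$ cancellation to be the main obstacle to carry out cleanly. Together with the primal estimate this gives the one-step inequality $Q(z_{k+1}^{\text{ag}},z)\le(1-\theta_k)Q(z_k^{\text{ag}},z)+\tfrac{\theta_k^2}{2\gamma_k}(\|x_k-x\|^2-\|x_{k+1}-x\|^2)+\tfrac{\gamma_k}{2}(\|y-y_k\|_M^2-\|y-y_{k+1}\|_M^2)$.

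Finally I would unroll this recursion. Writing $\Lambda_k:=\prod_{i=2}^k(1-\theta_i)$ with $\Lambda_1=1$, the base case $\theta_1=1$ kills the $Q(z_1^{\text{ag}},z)$ term, and unrolling gives $Q(z_{k+1}^{\text{ag}},z)\le \Lambda_k\sum_{j=1}^k \Lambda_j^{-1}\big(\tfrac{\theta_j^2}{2\gamma_j}(\|x_j-x\|^2-\|x_{j+1}-x\|^2)+\tfrac{\gamma_j}{2}(\|y-y_j\|_M^2-\|y-y_{j+1}\|_M^2)\big)$, with only nonnegative multipliers, so no sign control on the intermediate gaps is needed. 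Applying Abel summation to each telescoped block, the weights $\tfrac{\theta_j^2}{\gamma_j\Lambda_j}$ and $\tfrac{\gamma_j}{\Lambda_j}$ are nondecreasing—this is exactly the second and third inequalities in \eqref{condition}—so each weighted sum is dominated by its last weight times the largest distance encountered. Bounding those distances by the diameters $\Omega_1,\Omega_2$ of $B_1,B_2$ (the fourth requirement in \eqref{condition}, uniform boundedness of $\gamma_k^2/\theta_k^2$, being what keeps these suprema controlled) and using $\|\cdot\|_M^2\le \tfrac{\rho_{\max}(I-\lambda BB^T)}{\lambda}\|\cdot\|^2$, the factors $\Lambda_k$ cancel and leave precisely $\tfrac{\theta_k^2}{2\gamma_k}\Omega_1+\tfrac{\gamma_k}{2}\tfrac{\rho_{\max}(I-\lambda BB^T)}{\lambda}\Omega_2$. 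Taking the maximum over $z\in B_1\times B_2$ then yields \eqref{estimate}.
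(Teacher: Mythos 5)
Your one-step analysis and the weighted telescoping are essentially the paper's own route: your unrolling weights $\Lambda_j^{-1}$ coincide with the paper's $\Upsilon_j/\theta_j$ (defined by $(\theta_{k+1}^{-1}-1)\Upsilon_{k+1}=\Upsilon_k/\theta_k$, $\Upsilon_1=1$); your monotonicity claims for $\theta_j^2/(\gamma_j\Lambda_j)$ and $\gamma_j/\Lambda_j$ are exactly the paper's consequences of the second and third conditions in \eqref{condition}; and your one-step recursion is the paper's Lemma~\ref{lm3} with the nonpositive $\|x_{k+1}-x_k\|_2^2$ and $\|y_{k+1}-y_k\|_M^2$ terms discarded. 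The derivation you sketch for it (the md-point bookkeeping, elimination of $\nabla f(x_k^{\text{md}})$ through the primal step, and the substitution $\overline{x}_k=x_{k+1}+\tfrac{\gamma_k}{\theta_k}B^T(y_{k+1}-y_k)$ producing the $M$-seminorm with $M=\tfrac{1}{\lambda}I-BB^T$) matches Lemmas~\ref{lm1} and~\ref{lm2}.

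The gap is in the final step. After Abel summation the intermediate quantities $\|x_{j}-x\|_2^2$ appear with nonnegative coefficients $\tfrac{\theta_{j}\Upsilon_{j}}{\gamma_{j}}-\tfrac{\theta_{j-1}\Upsilon_{j-1}}{\gamma_{j-1}}\ge 0$, and you propose to bound ``the largest distance encountered'' by the diameter of $B_1$. But these are distances between an \emph{iterate} $x_j$ and a point $x\in B_1$, and nothing forces the iterates to lie in $B_1$; a priori they could be unbounded, in which case the weighted sum cannot be dominated as you claim. The paper devotes Step~1 of its proof to exactly this point: it sums the one-step inequality at a saddle point $\hat z$ (where $Q(z_{k+1}^{\text{ag}},\hat z)\ge 0$), uses the Lipschitz continuity of $g$ --- a hypothesis your proposal never invokes --- to obtain boundedness of the domain of $g^*$ and hence of the dual iterates, and then uses the ``increasing and uniformly bounded'' condition on $\gamma_k^2/\theta_k^2$ to absorb the accumulated dual contributions and conclude $\sup_k\|x_k-\hat x\|_2<\infty$. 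Your parenthetical remark that the fourth condition ``keeps these suprema controlled'' points at the right hypothesis but is not an argument; you need to supply this boundedness step (and correspondingly read $\Omega_1$ as $\max_{x\in B_1}\sup_j\|x_j-x\|_2^2$, a constant merely \emph{related to} the diameter of $B_1$, rather than the diameter itself) before the estimate \eqref{estimate} follows.
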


\begin{corollary}\label{col1}
If we choose parameters $\theta_k,\gamma_k$ in Theorem \ref{condition} as
\begin{equation}
\theta_{k} = \frac{2}{k + 1},\gamma_k = \frac{1}{L_f + ck},  0 < c < L_f
\end{equation}
then the condition~\textbf{(\ref{condition})} holds, and we obtain the following estimate:
\begin{equation}\label{estimate2}
\mathcal{G}_{B_1 \times B_2}(x_{k+1}^{\text{ag}},y_{k+1}^{\text{ag}}) 
\leq \frac{2L_f}{(k + 1)^2}\Omega_1 + \frac{2ck}{(k + 1)^2}\Omega_1 
+\frac{1}{2(L_f + ck)}\frac{\rho_{\max}(I - \lambda BB^T)}{\lambda}\Omega_2. \\   
\end{equation}
\end{corollary}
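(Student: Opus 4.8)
The plan is to treat Corollary~\ref{col1} as a direct specialization of Theorem~\ref{thm1}: the entire content is to verify that the explicit choices $\theta_k = \frac{2}{k+1}$ and $\gamma_k = \frac{1}{L_f + ck}$ satisfy the five requirements listed in~\eqref{condition}, and then to substitute these values into the general bound~\eqref{estimate}. No new analytic machinery is needed; everything reduces to elementary algebra together with a single monotonicity argument, so I would present it as a sequence of short verifications.

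First I would dispose of the easy conditions. The normalization holds since $\theta_1 = \frac{2}{1+1} = 1$. For the step-size bound, because $c > 0$ and $k \geq 1$ we have $L_f + ck > L_f$, whence $\gamma_k = \frac{1}{L_f+ck} < \frac{1}{L_f}$, giving $0 < \gamma_k \leq \frac{1}{L_f}$. I would then turn to the two ratio conditions. A direct computation yields $\frac{\theta_{k+1}^2}{\theta_k^2(1-\theta_{k+1})} = \frac{(k+1)^2}{k(k+2)} = 1 + \frac{1}{k(k+2)} > 1$, while $\frac{\gamma_{k+1}}{\gamma_k} = \frac{L_f+ck}{L_f + c(k+1)} < 1$; thus the second condition holds simply because its left-hand side is below $1$ and its right-hand side exceeds $1$. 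For the third condition I would compute $\frac{\gamma_k}{\gamma_{k+1}} = 1 + \frac{c}{L_f+ck}$ and $\frac{1}{1-\theta_{k+1}} = \frac{k+2}{k} = 1 + \frac{2}{k}$, so the required inequality is equivalent to $ck \leq 2(L_f+ck)$, i.e. $0 \leq 2L_f + ck$, which always holds.

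The step I expect to require the most care is the fourth condition, namely that $\frac{\gamma_k^2}{\theta_k^2} = \frac{(k+1)^2}{4(L_f+ck)^2}$ be increasing and uniformly bounded. Since every quantity in sight is positive, it suffices to study $h(k) = \frac{k+1}{2(L_f+ck)}$; treating $k$ as a continuous variable, its derivative is $h'(k) = \frac{L_f-c}{2(L_f+ck)^2}$, which is strictly positive precisely because of the standing hypothesis $c < L_f$. This is exactly the place where that assumption is used, and I would flag it as the crux of the verification. Monotone increase then forces $\frac{\gamma_k^2}{\theta_k^2} = h(k)^2$ to approach its supremum $\frac{1}{4c^2}$ as $k \to \infty$, which supplies the required uniform bound.

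Finally, I would substitute $\theta_k = \frac{2}{k+1}$ and $\gamma_k = \frac{1}{L_f+ck}$ into~\eqref{estimate}. The prefactor of $\Omega_1$ becomes $\frac{\theta_k^2}{2\gamma_k} = \frac{2(L_f+ck)}{(k+1)^2} = \frac{2L_f}{(k+1)^2} + \frac{2ck}{(k+1)^2}$, and the prefactor of the $\Omega_2$ term becomes $\frac{\gamma_k}{2} = \frac{1}{2(L_f+ck)}$, so that the two terms together reproduce~\eqref{estimate2} exactly. In summary, the corollary is obtained by a routine parameter check in which the only genuinely nontrivial point is the monotonicity in the fourth condition, secured by the constraint $c < L_f$.
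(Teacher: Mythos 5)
Your proposal is correct and follows essentially the same route as the paper: verify the five requirements in~\eqref{condition} by elementary algebra, establish monotonicity of $\frac{\gamma_k^2}{\theta_k^2}$ via a derivative computation that uses $c < L_f$, and substitute the parameters into~\eqref{estimate}. The only cosmetic differences are that you check the second ratio condition by observing the left side is below $1$ while the right side exceeds $1$ (the paper cross-multiplies instead), and you differentiate the square root $h(k)=\frac{k+1}{2(L_f+ck)}$ rather than the squared quantity; both are equivalent to the paper's argument.
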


\begin{remark}
\textbf{(Comparison with PDFP)} Compared to PDFP, the APDFP algorithm adopts a decreasing step size $\gamma_k$ and introduces an additional parameter $c$ that, in principle, requires tuning. Nevertheless, in practice, setting $c = 0$, i.e., $\gamma_k := \gamma = \frac{1}{L_f}$, typically yields satisfactory performance. In addition, similar to PDFP, APDFP demonstrates a high degree of insensitivity to the choice of the parameter $\lambda$, which can be safely set to its theoretical upper bound without adversely affecting convergence (see Subsection~\ref{lambda}). These two parameters are primarily determined by the optimization model itself and do not exhibit intricate interdependencies. With the explicit selection of $\theta_k = \frac{2}{k + 1}$, APDFP requires only minimal modifications—namely, the introduction of two aggregate updates, $x_k^{\text{md}}$ and $x_{k+1}^{\text{ag}}$—without imposing any substantial additional burden in parameters tuning. Despite these lightweight changes, APDFP attains both improved theoretical convergence guarantees and enhanced empirical performance as we will show in Section \ref{sec:experiments}.
\end{remark}

\begin{remark}
\textbf{(Limitation to unbounded case)} The convergence analysis in our work relies on the partial primal-dual gap, as the standard primal-dual gap may become unbounded in the absence of domain boundedness assumptions. In contrast, the APD \cite{APD} addresses unbounded scenarios using a perturbation-based termination criterion developed by Monteiro and Svaiter \cite{monteiro2010complexity, monteiro2011complexity, monteiro2013iteration}. We attempted to establish similar convergence results for APDFP. However, we encountered theoretical obstacles that we were unable to resolve. The core difficulty lies in the reduced degree of freedom within the APDFP algorithm. Specifically, while APD allows independent algorithmic parameters $\gamma_k$ and $\tau_k$ of the primal and dual updates, APDFP ties fixed relationship (i.e., $\gamma_k$ for the primal variable and $\frac{\lambda}{\gamma_k}$ for the dual). This coupling restricts the flexibility needed to apply the perturbation-based analysis framework. As such, we extend the convergence theory of APDFP to the unbounded domain setting to the future work.
\end{remark}

\begin{remark}
\textbf{(Extension to relative smooth)} It is observed that the primal updates in APD, APDFP, and the first subproblem of AADMM can all be interpreted as variants of gradient descent. For APDFP, the convergence analysis requires the objective function \( f \) to be uniformly smooth—that is, its gradient must have a uniformly Lipschitz constant. 
However, for problems where \( f \) does not satisfy uniform Lipschitz continuity but is instead relatively smooth with respect to some reference function \( h \) (i.e., \( h(x) - f(x) \) is convex, the conventional smoothness setting corresponds to \( h(x) = \frac{L_f}{2}\|x\|_2^2 \)), both APD and AADMM remain applicable by substituting standard accelerated gradient descent with accelerated Bregman gradient descent \cite{BPG}.
 Unfortunately, APDFP is currently not equipped to handle such problems, due to limitations inherent in its update rule structure.

\end{remark}

\section{Numerical Experiments}\label{sec:experiments}
In this section, we evaluate the performance of the proposed APDFP through experiments on graph-guided logistic regression and 2D CT reconstruction. As outlined in the introduction, we compare APDFP against aforementioned algorithms that incorporate Nesterov-type acceleration: APD (Algorithm \ref{APD}) and AADMM (Algorithm \ref{AADMM}). Additionally, we include  IPDFP (Algorithm \ref{iPDFP}) in our comparison. To highlight the benefit of acceleration schemes, we also benchmark against their non-accelerated counterparts: PDFP (Algorithm \ref{PDFP}), the LPDHGm (Algorithm \ref{LPDHGm}) and the LP-ADMM (Algorithm \ref{LPADMM}).
All algorithms are implemented in the Python programming language and evaluated on a desktop equipped with an Intel\textsuperscript{\textregistered} Core\textsuperscript{\texttrademark}  i9 processor. The iterations were terminated once the relative error between consecutive iterates, i.e., $\frac{\|x_{k+1} - x_k\|}{\|x_k\|}$
fell below the prescribed threshold of $10^{-3}$, or when the maximum number of iterations was reached.

\subsection{Graph-Guided Logistic regression}
In this subsection, we present numerical experiments on graph-guided logistic regression to evaluate the performance of the proposed APDFP algorithm. The mathematical formulation of the graph-guided logistic regression model is given as follows:
\begin{equation}\label{GGLST}
\underset{x \in \mathbb{R}^d}{\min}~\frac{1}{N} \sum_{i = 1}^{N} \log \big(1 + \exp(-b_is_i^Tx) \big) + \frac{\mu_1}{2}\lVert x \rVert_2^2 + \mu_2\lVert Bx \rVert_1, 
\end{equation}
where \( b_i \in \{-1, 1\} \) denote the label corresponding to the sample \( s_i \in \mathbb{R}^d \), and \( N \) is the total number of samples. The matrix \( B \) represents the precision matrix derived via sparse inverse covariance estimation~\cite{GLasso}. The regularization parameters \( \mu_1 \) and \( \mu_2 \) control the balance between the data fidelity term and the regularization terms. We evaluate the proposed methods on three real-world datasets obtained from the LIBSVM repository~\cite{LIBSVM}, namely \textit{a9a}, \textit{mushrooms}, and \textit{w8a}. Detailed dataset configurations are summarized in Table~\ref{datasets}. The regularization parameters \( \mu_1 \) and \( \mu_2 \) are selected via grid search to maximize testing accuracy. For all baseline algorithms, we follow the parameter tuning protocols described in~\cite{APD,AADMM}, manually adjusting parameters to ensure optimal empirical performance. For APDFP, we set $\gamma_k := \frac{1}{L_f}$ and $\lambda = \frac{1}{\rho_{\max}(BB^T)}$. To benchmark convergence behavior, we first run the PDFP algorithm for 10{,}000 iterations to approximate the optimal function value. Subsequently, we assess algorithmic performance by plotting the relative error of the training loss and the testing accuracy (log scale) across iterations. As reported in Table~\ref{datasets}~((a), (c), and (e)), the non-accelerated methods exhibit similar convergence behavior. In contrast, the accelerated variants---APD, APDFP, and AADMM---converge substantially faster. In particular, APDFP and APD exhibits the faster convergence compared to AADMM on the three datasets. IPDFP exhibits faster convergence on \textit{a9a} and \textit{mushroom} compared to non-accelerated algorithms. This advantage, however, vanishes on \textit{w8a}. Testing accuracy results, as shown in Table~\ref{datasets}~((b), (d), and (f)), further confirm that the accelerated methods reach higher classification accuracy with fewer iterations.

\begin{table}[htp]
\setlength{\belowcaptionskip}{0.2cm} 
\caption{Configuration of different data sets.}
\centering
\begin{tabular}{|c|c|c|c|c|c|c|c|c|c|c|c|}
\hline
\multicolumn{2}{|c|}{Data sets}
& \multicolumn{2}{|c|}{$\sharp$ of samples}
& \multicolumn{2}{|c|}{$\sharp$ of train}
& \multicolumn{2}{|c|}{$\sharp$ of test}
& \multicolumn{2}{|c|}{$\sharp$ of features} \\
\hline
\multicolumn{2}{|c|}{a9a}
& \multicolumn{2}{|c|}{$32,561$}
& \multicolumn{2}{|c|}{$26,053$}
& \multicolumn{2}{|c|}{$6,508$}
& \multicolumn{2}{|c|}{$123$} \\
\hline
\multicolumn{2}{|c|}{mushrooms}
& \multicolumn{2}{|c|}{$8,124$}
& \multicolumn{2}{|c|}{$6,451$}
& \multicolumn{2}{|c|}{$1,673$}
& \multicolumn{2}{|c|}{$113$} \\
\hline
\multicolumn{2}{|c|}{w8a}
& \multicolumn{2}{|c|}{$49,749$}
& \multicolumn{2}{|c|}{$39,732$}
& \multicolumn{2}{|c|}{$10,017$}
& \multicolumn{2}{|c|}{$301$} \\
\hline
\end{tabular}
\label{datasets}
\end{table}

\begin{figure}[htbp!]
\centering
\subfloat[a9a (train)]{
\includegraphics[width = 2.5 in]{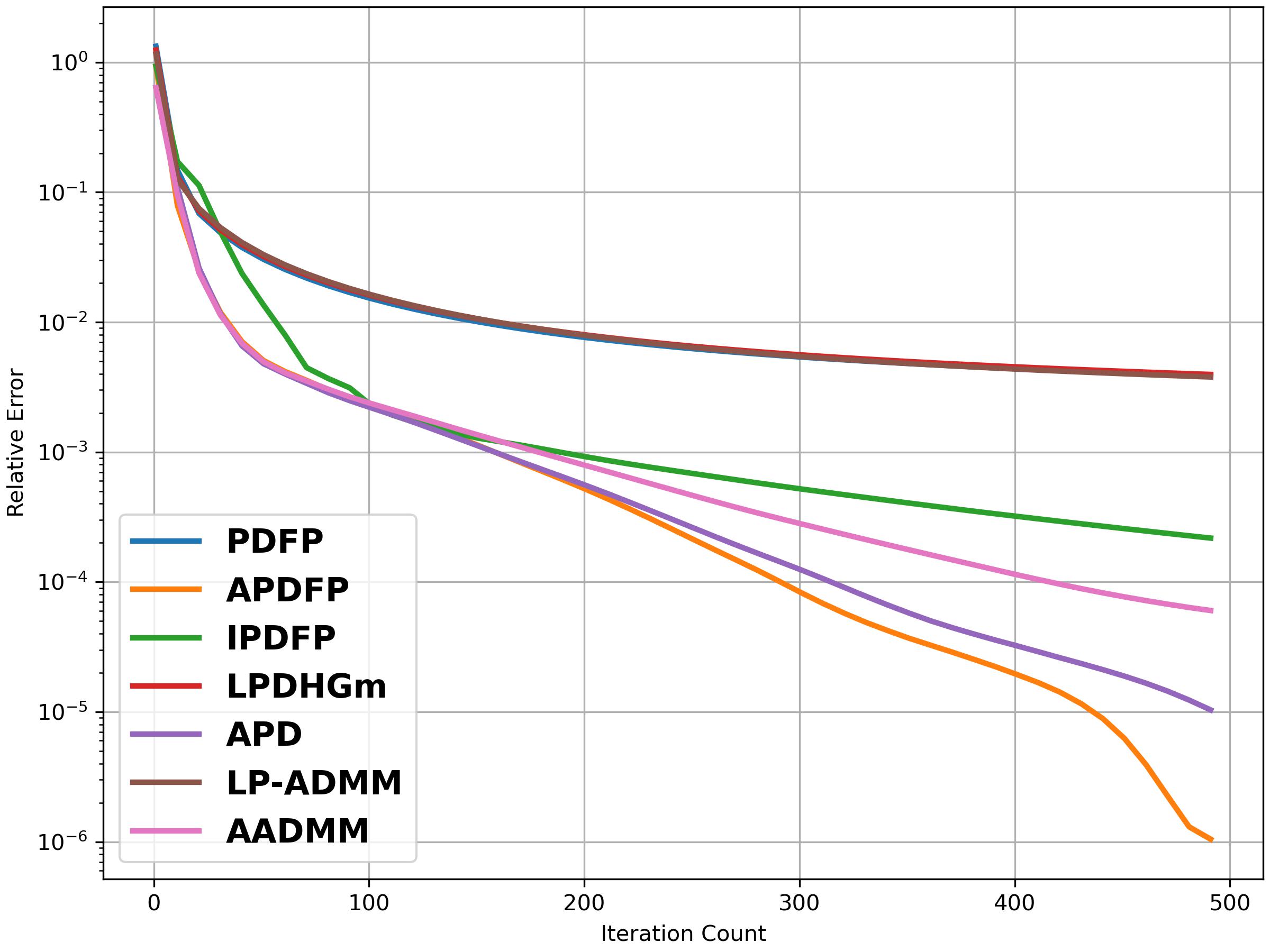}
}
\subfloat[a9a (accuracy) ]{
\includegraphics[width = 2.5 in]{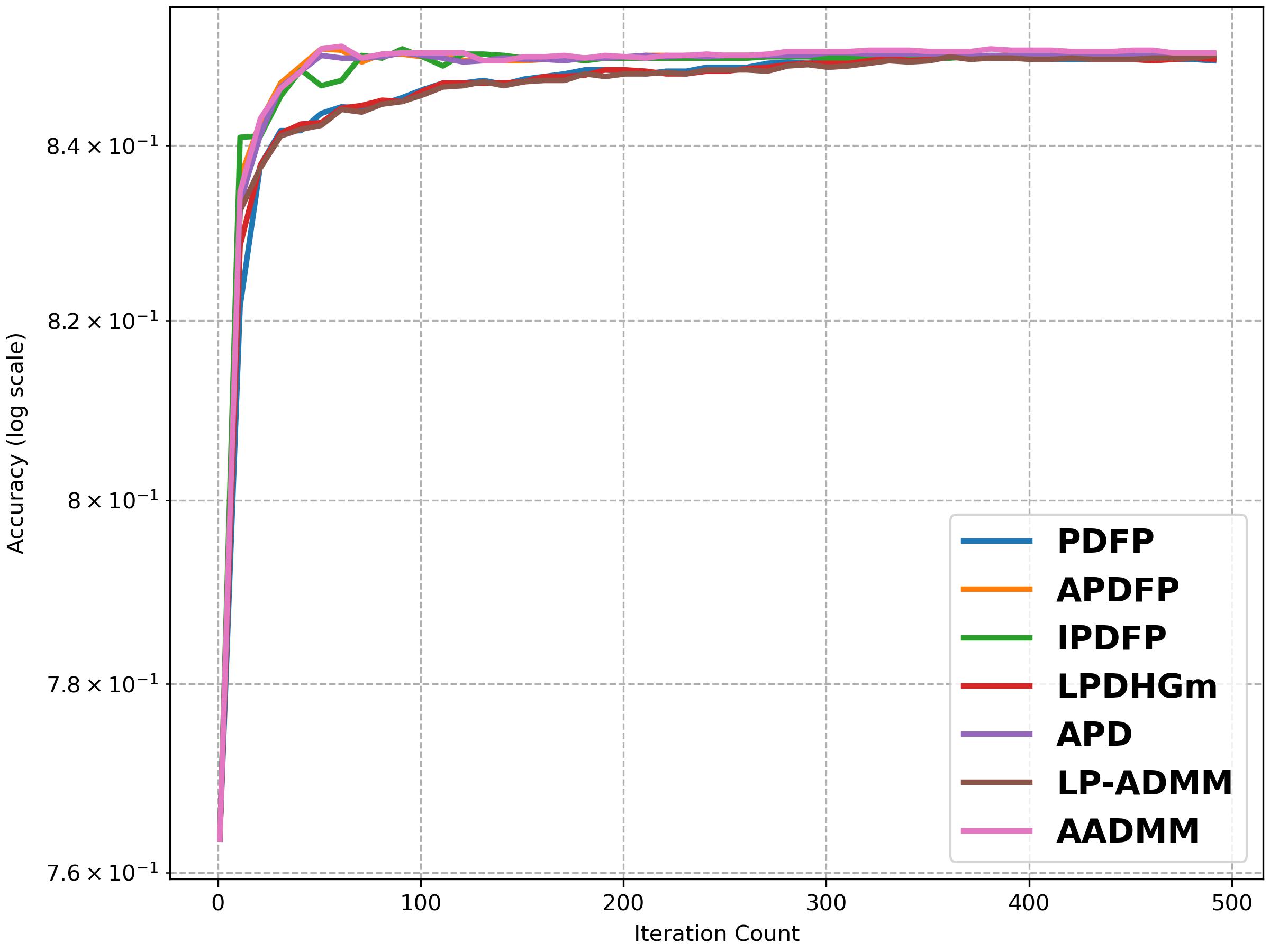}
}\\[-1mm]
\subfloat[mushrooms (train)]{
\includegraphics[width = 2.5 in ]{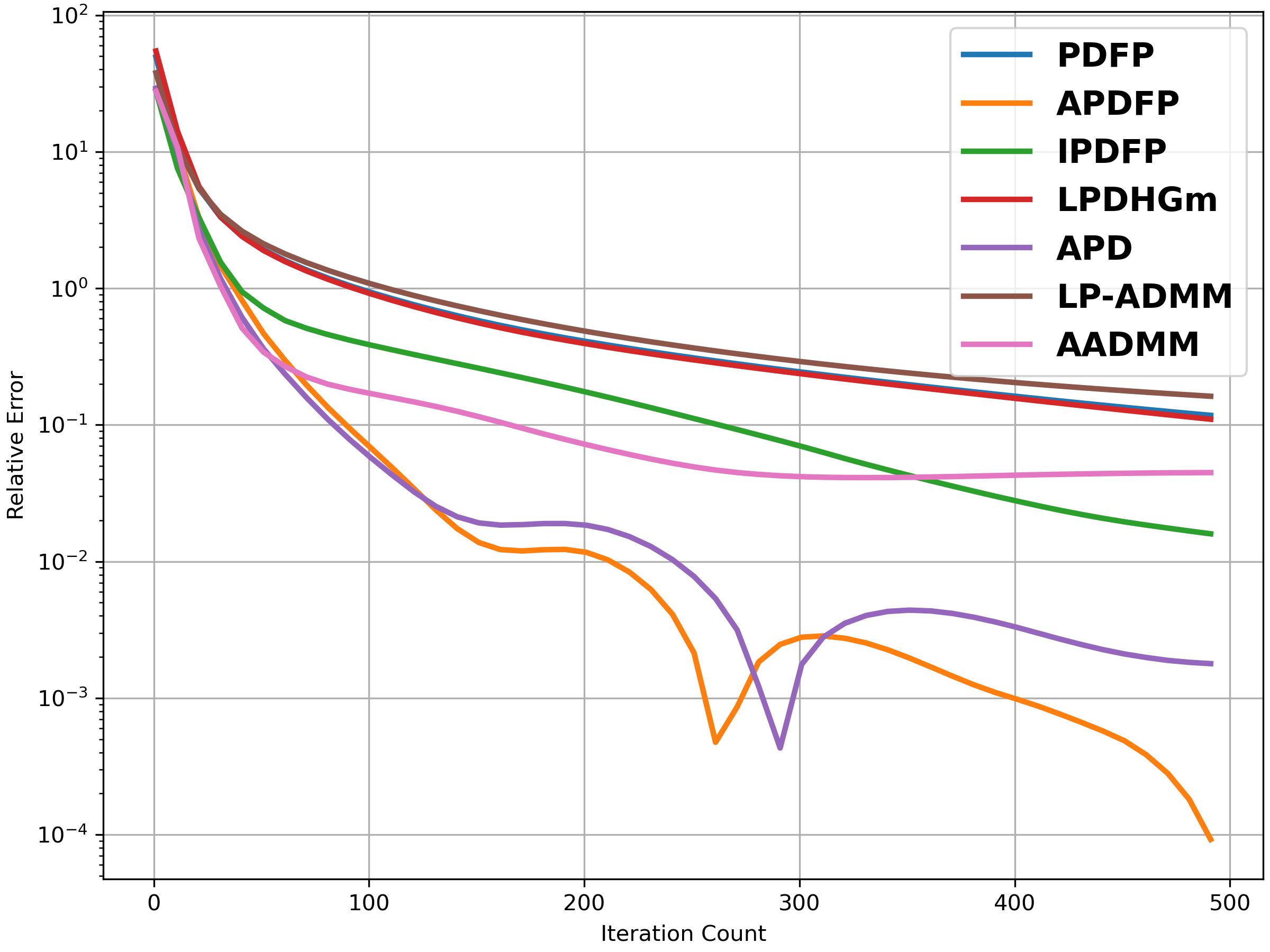}
}
\subfloat[mushrooms (accuracy) ]{
\centering
\includegraphics[width = 2.5 in]{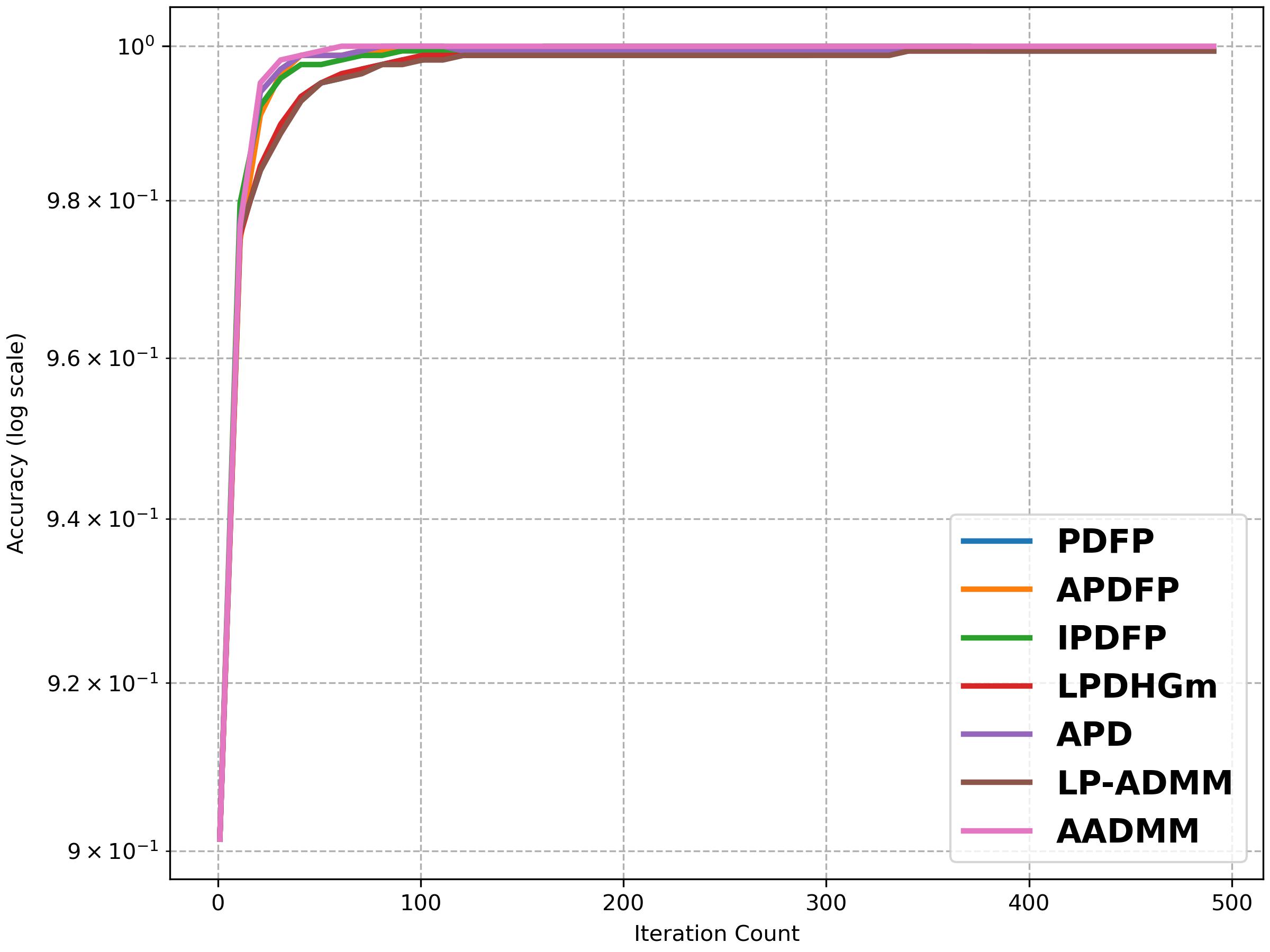}
}\\[-1mm]
\subfloat[w8a (train)]{
\includegraphics[width = 2.5 in ]{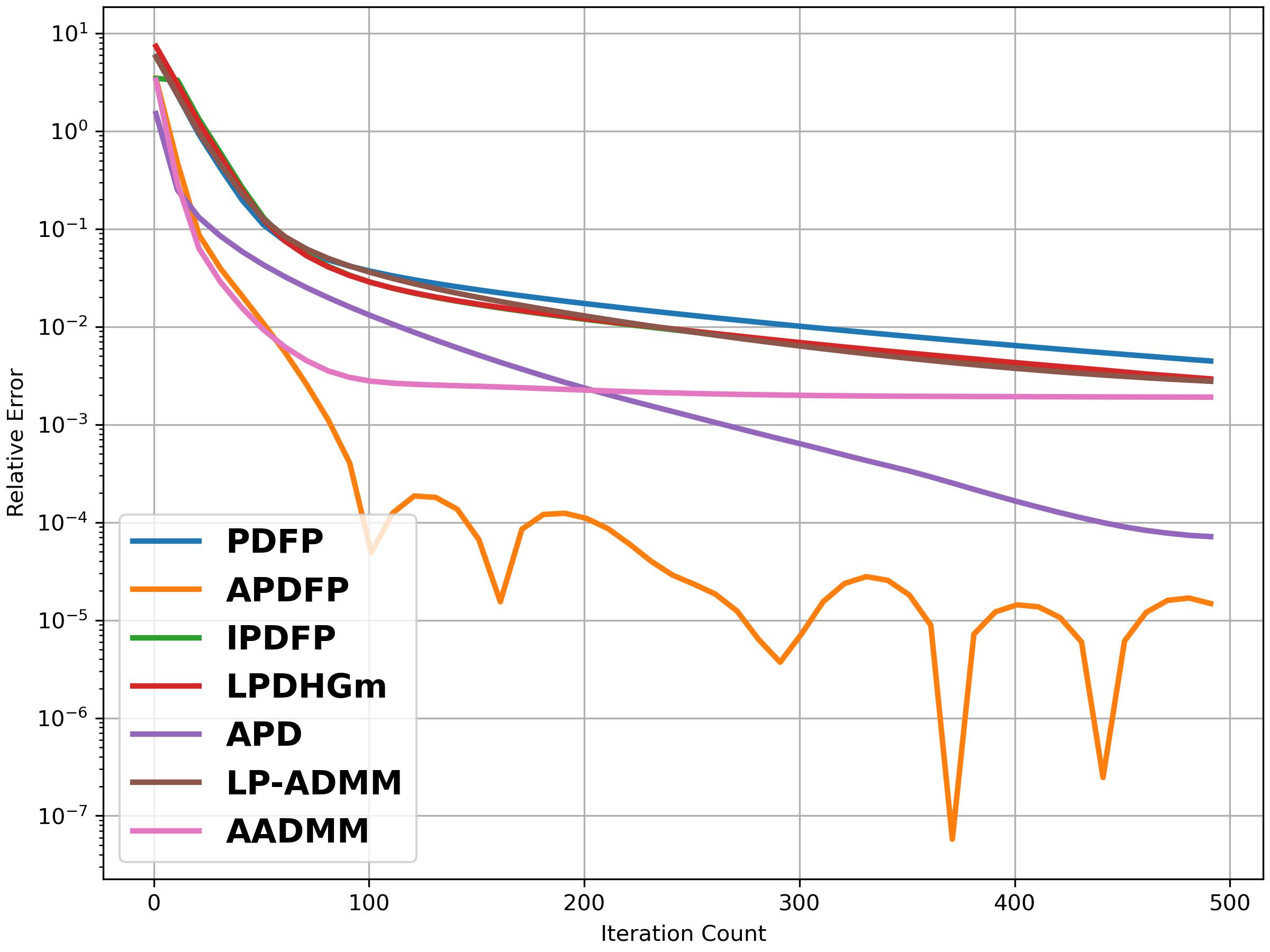}
}
\subfloat[w8a (accuracy) ]{
\includegraphics[width = 2.5 in]{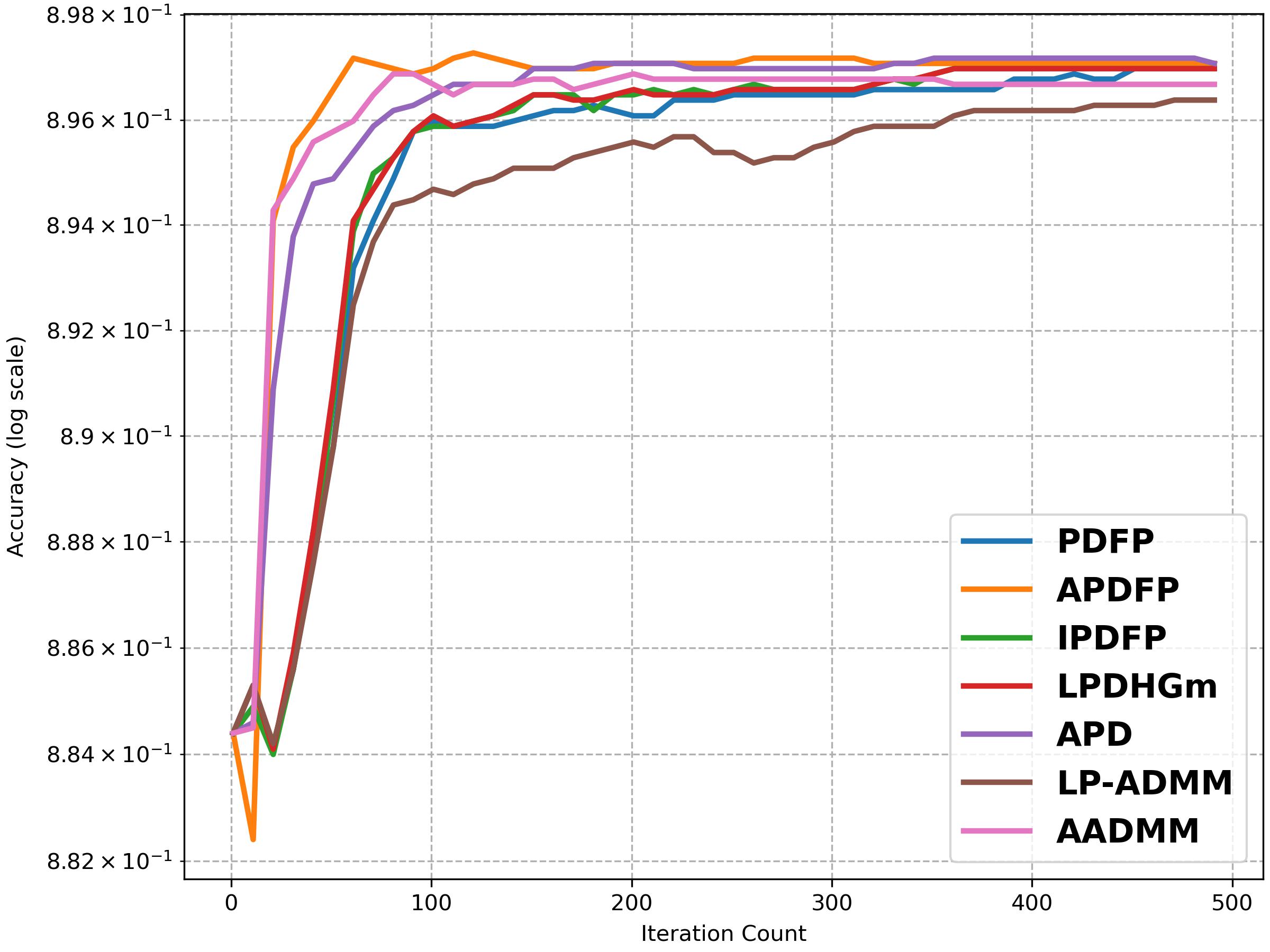}
}
\caption{The relative error of training and testing accuracy on different data sets.}
\label{GGLST1}
\end{figure} 

\subsection{2D CT reconstruction.}
2D CT Reconstruction refers to the process of reconstructing an image of an object from its 2D X-ray projections taken at various angles. The optimization problem using TV-$L_2$ model takes the following form
\begin{equation}\label{TVl2}
 \min_{x \in \mathbb{R}^d}~\frac{1}{2}\lVert \mathcal{A}x - y \rVert_2^2 + \mu \lVert \nabla x \rVert_{1,2}, 
\end{equation}
where 
\begin{itemize}
  \item \( x \in \mathbb{R}^d \) denote the vectorized image to be reconstructed and \( d = 512 \times 512 \) corresponds to the total number of pixels in the 2D image.
  \item The discrete X-ray transform \( \mathcal{A} \) \cite{Siddon,Xray} maps the image \( x \in \mathbb{R}^d \) to its projection measurements by computing the intersection lengths between each ray path and the image pixels, i.e.,
  \begin{equation*}
  \begin{aligned}
  & y_{a,v} = \sum_{(i,j) \in D_{a,v}} \ell_{(a,v),(i,j)}\tilde{x}_{i,j}, a \leq n_d, v \leq n_v; \\
  & D_{a,v} = \{(i,j):\ell_{(a,v),(i,j)} > 0,i \leq 512,j \leq 512\}
  \end{aligned}
  \end{equation*}
Here, $\tilde{x}$ denotes the image prior to concatenation, and $i$ and $j$ index the pixels. 
The symbols $a$ and $v$ correspond to the indices of the $a$-th projection angle and the $v$-th ray, respectively. 
$D_{a,v}$ denotes the set of pixel indices that have a nontrivial intersection with the $v$-th beam at the $a$-th view. 
In our setting, The size of the detector line is \( n_v = 512 \), and the number of projection angles is \( n_a = 360 \). To alleviate the computational cost associated with evaluating the gradient of the data fidelity term, we adopt the parallelization strategy proposed in~\cite{Xray} for both the X-ray transform and its adjoint.
  \item  $y$ is the noisy  projection vector which is obtained by adding a Gaussian noise $\varepsilon$ with zero mean and variance $0.03$ to the projection data of ground truth image $x_0$, i.e., $b = \mathcal{A}x_0 + \varepsilon$.
  \item $\mu = 10^{-3}$ is the regularization parameter.
  \item $\nabla $ is the discrete gradient operator that maps a two dimension image $x$ to a vector field consisting of vertical and horizontal finite difference as follows
  \begin{equation*}
    (\nabla x)_{i,j} =\left (
    \begin{aligned}
    (\nabla x)_{i,j}^1, 
    (\nabla x)_{i,j}^2 
    \end{aligned}
    \right )^T
  \end{equation*}
  where 
   \begin{equation*}
   \begin{aligned}
    (\nabla x)_{i,j}^1 &= \left \{
    \begin{aligned}
    & x_{i +1,j} - x_{i,j}, ~\mathrm{if} ~ 1 \leq i< 512 \\
    & 0, ~\mathrm{if} ~ i= 512
    \end{aligned}
    \right. \\
    (\nabla x)_{i,j}^2 &= \left \{
    \begin{aligned}
    & x_{i,j + 1} - x_{i,j}, ~\mathrm{if} ~ 1 \leq j< 512 \\
    & 0, ~\mathrm{if} ~ j= 512
    \end{aligned}
    \right. 
    \end{aligned}
  \end{equation*} 
The isotropic total variation  $\lVert \nabla x \rVert_{1,2}$ is  defined by 
\begin{equation*}
\begin{aligned}
\hspace*{-6mm}\lVert \nabla x \rVert_{1,2}
& = \sum_{i = 1}^{512}\sum_{j = 1}^{512} \lVert (\nabla x)_{i,j} \rVert_2 
 = \sum_{i = 1}^{512}\sum_{j = 1}^{512} \sqrt{((\nabla x)_{i,j}^1)^2 + ((\nabla x)_{i,j}^2)^2}.
\end{aligned}
\end{equation*}
\end{itemize}   
For all baseline algorithms, we follow the parameter-tuning protocols outlined in~\cite{APD,AADMM} and further adjust the parameters manually to obtain the best empirical performance. 
For APDFP, we set $\gamma_k := \tfrac{1}{L_f}$, where $L_f = \rho_{\max}(\mathcal{A}^\top \mathcal{A})$ is estimated using the power method \cite{golub2013matrix}, and choose $\lambda = \tfrac{1}{8}$, since the maximum eigenvalue of $\nabla^\top \nabla$ is $\rho_{\max}(\nabla^\top \nabla) = 8$~\cite{CP}.
 The function values and PSNRs as a function of iteration number are shown in Figure~\ref{CTobj}. From the objective value plot, we observe that the algorithms without Nesterov acceleration exhibit similar convergence behavior. In contrast, the accelerated variants converge substantially faster and it is interesting to find that the three accelerated algorithm exhibit nearly the same convergence speed. A practical advantage of APDFP is that it requires no parameter tuning.  From PSNR plot, it is evident that the accelerated algorithms achieve higher PSNR values at earlier iterations compared to their non-accelerated counterparts. However, PSNR typically decreases after reaching a peak as the number of iterations increases. Although the objective values of the accelerated algorithms eventually converge to similar levels (The non-accelerated algorithms require significantly more iterations to reach the optimal function value which exceed the maximum iteration), the difference in PSNR behavior suggests that they may converge to distinct optimal point. Therefore, for the CT reconstruction task considered, early stopping is generally recommended for accelerated algorithms to preserve image quality. The reconstructed images in Figure~\ref{CT2} further demonstrate the effectiveness of the proposed algorithms.

\begin{figure}[htbp!]
\centering
\subfloat[objective value ]{
\centering
\includegraphics[width = 2.5 in]{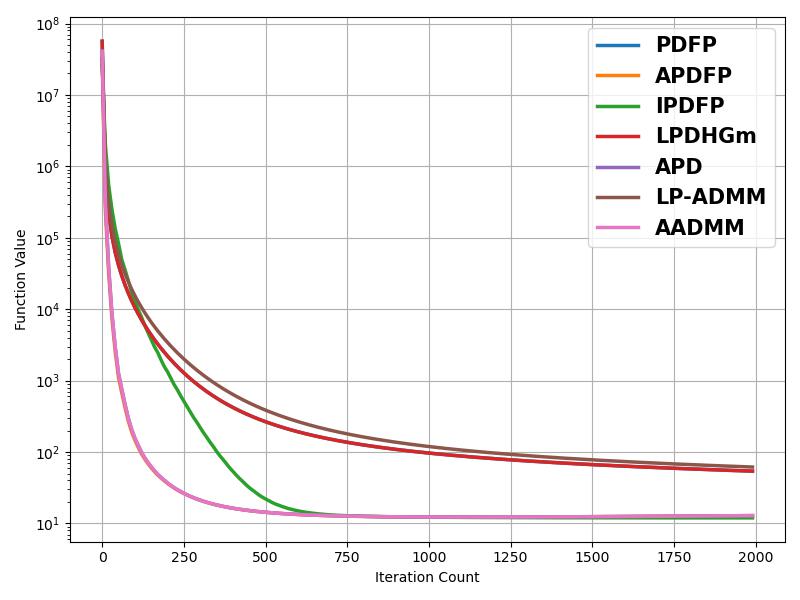}
}
\subfloat[PSNR ]{
\centering
\includegraphics[width = 2.5 in]{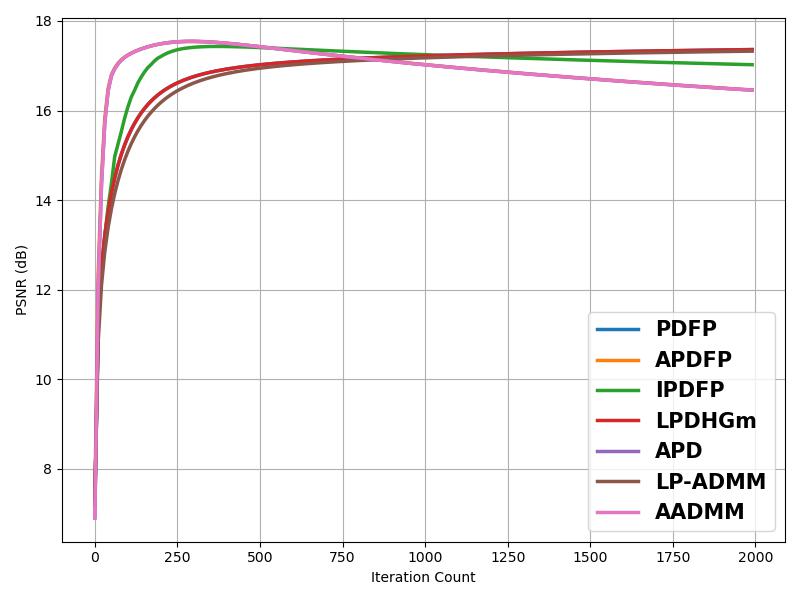}
}
\caption{The function value and PSNR versus the iteration number.}
\label{CTobj}
\end{figure}

\begin{figure}[!h]
\centering
\renewcommand{\arraystretch}{0.0} 

\begin{tabular}{@{}c@{\hskip 8pt}c@{}} 
\includegraphics[trim=60 50 60 50, clip, width=0.28\linewidth]{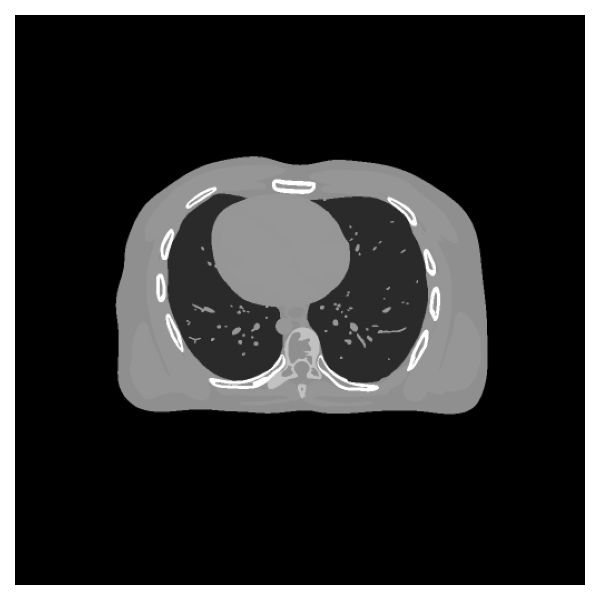} &
\includegraphics[trim=60 50 60 50, clip, width=0.28\linewidth]{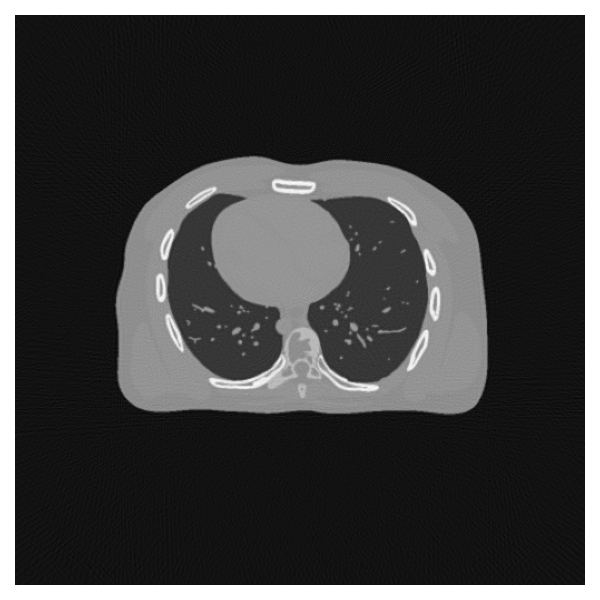} \\[2mm]
\scriptsize Ground Truth &
\scriptsize PDFP, $\mathrm{PSNR} = 17.36$
\\[2mm]

\includegraphics[trim=60 50 60 50, clip, width=0.28\linewidth]{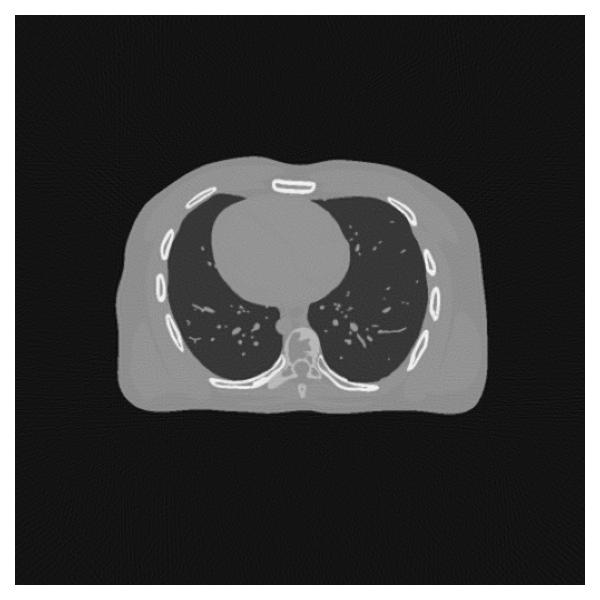} &
\includegraphics[trim=60 50 60 50, clip, width=0.28\linewidth]{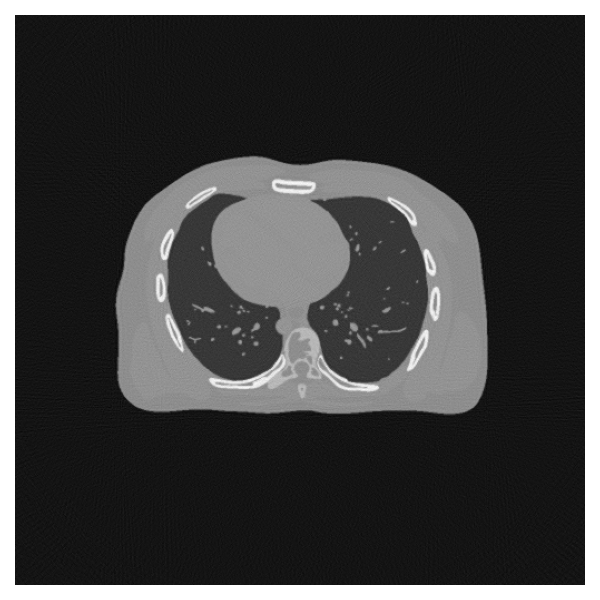} \\[2mm]
\scriptsize APDFP, $\mathrm{PSNR} = 17.54$ &
\scriptsize IPDFP, $\mathrm{PSNR} = 17.43$
\\[2mm]

\includegraphics[trim=60 50 60 50, clip, width=0.28\linewidth]{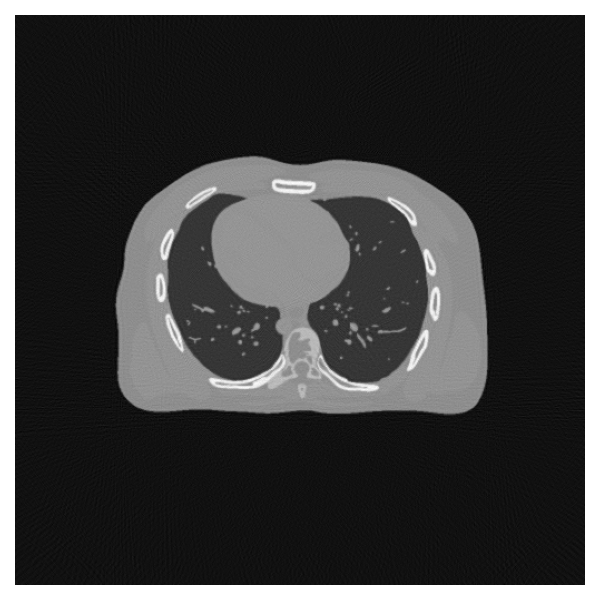} &
\includegraphics[trim=60 50 60 50, clip, width=0.28\linewidth]{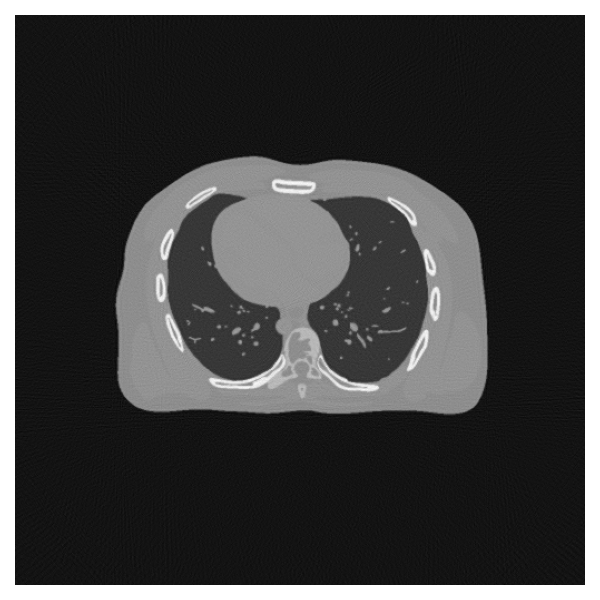} \\[2mm]
\scriptsize LPDHG, $\mathrm{PSNR} = 17.36$ &
\scriptsize APD, $\mathrm{PSNR} = 17.54$
\\[2mm]

\includegraphics[trim=60 50 60 50, clip, width=0.28\linewidth]{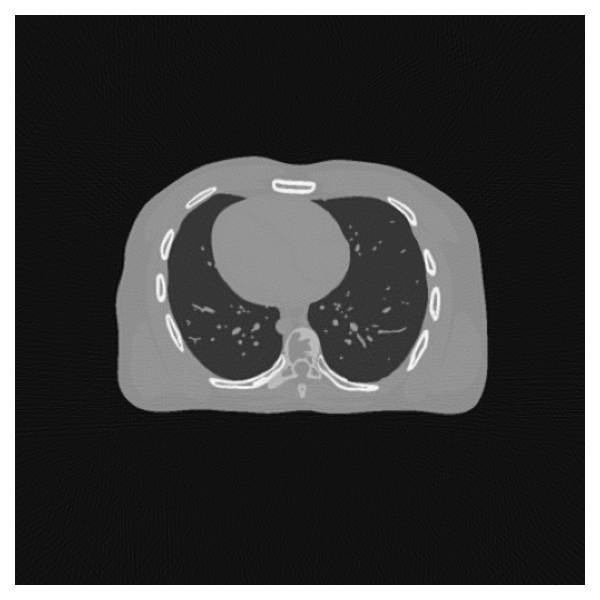} &
\includegraphics[trim=60 50 60 50, clip, width=0.28\linewidth]{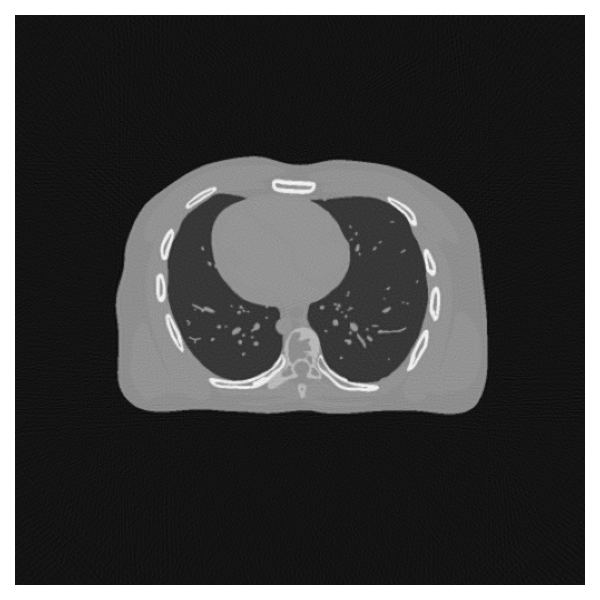} \\[2mm]
\scriptsize LADMM, $\mathrm{PSNR} = 17.39$ &
\scriptsize AADMM, $\mathrm{PSNR} = 17.54$
\end{tabular}
\vspace{2mm}
\caption{One slice of the reconstructed image produced by different algorithms. The corresponding PSNR values are indicated below each image.}
\label{CT2}
\end{figure}

\subsection{Effect of $\lambda$.}\label{lambda}
One notable property of the PDFP algorithm is its insensitivity to the algorithmic parameter $\lambda$. Empirically, setting $\lambda = \frac{1}{\rho(BB^T)}$ typically yields satisfactory performance without compromising convergence behavior~\cite{PDFP}. In this section, we investigate whether this robustness with respect to $\lambda$ also extends to APDFP. To this end, we evaluate the performance of APDFP using a range of $\lambda$ values: $\frac{1}{\rho(BB^T)}$, $\frac{0.7}{\rho(BB^T)}$, $\frac{0.5}{\rho(BB^T)}$, $\frac{0.3}{\rho(BB^T)}$, and $\frac{0.1}{\rho(BB^T)}$, across the graph-guided logistic regression and CT reconstruction. As illustrated in Figures~\ref{GGLST-lam} and~\ref{CT-2D-lam}, APDFP demonstrates nearly identical convergence profiles across all tested values of $\lambda$ in both problems, thereby confirming its robustness with respect to this parameter.

\begin{figure}[htbp!]
\centering
\subfloat[a9a (train)]{
\includegraphics[width = 2.5 in ]{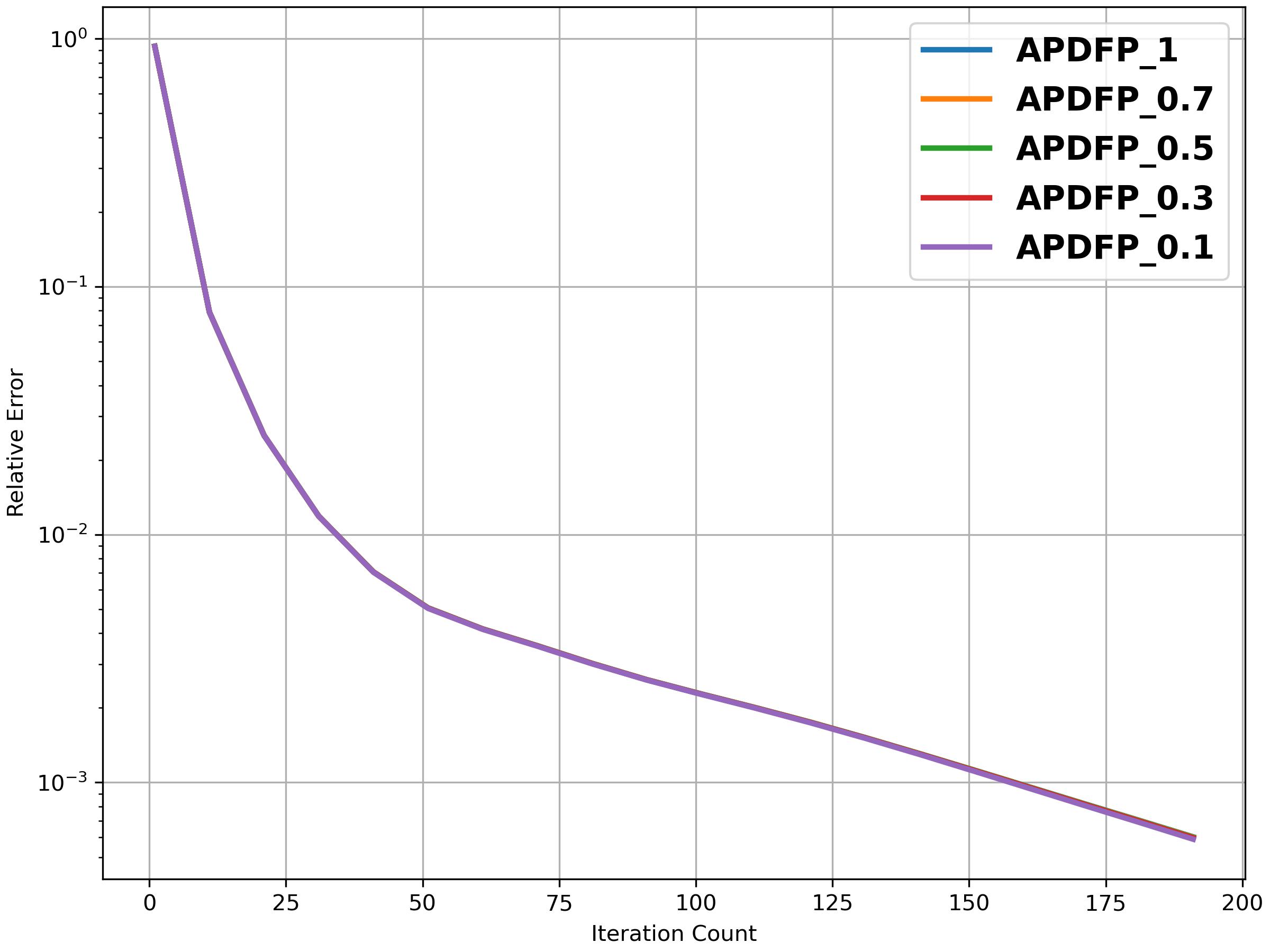}
}
\subfloat[a9a (accuracy) ]{
\includegraphics[width = 2.5 in]{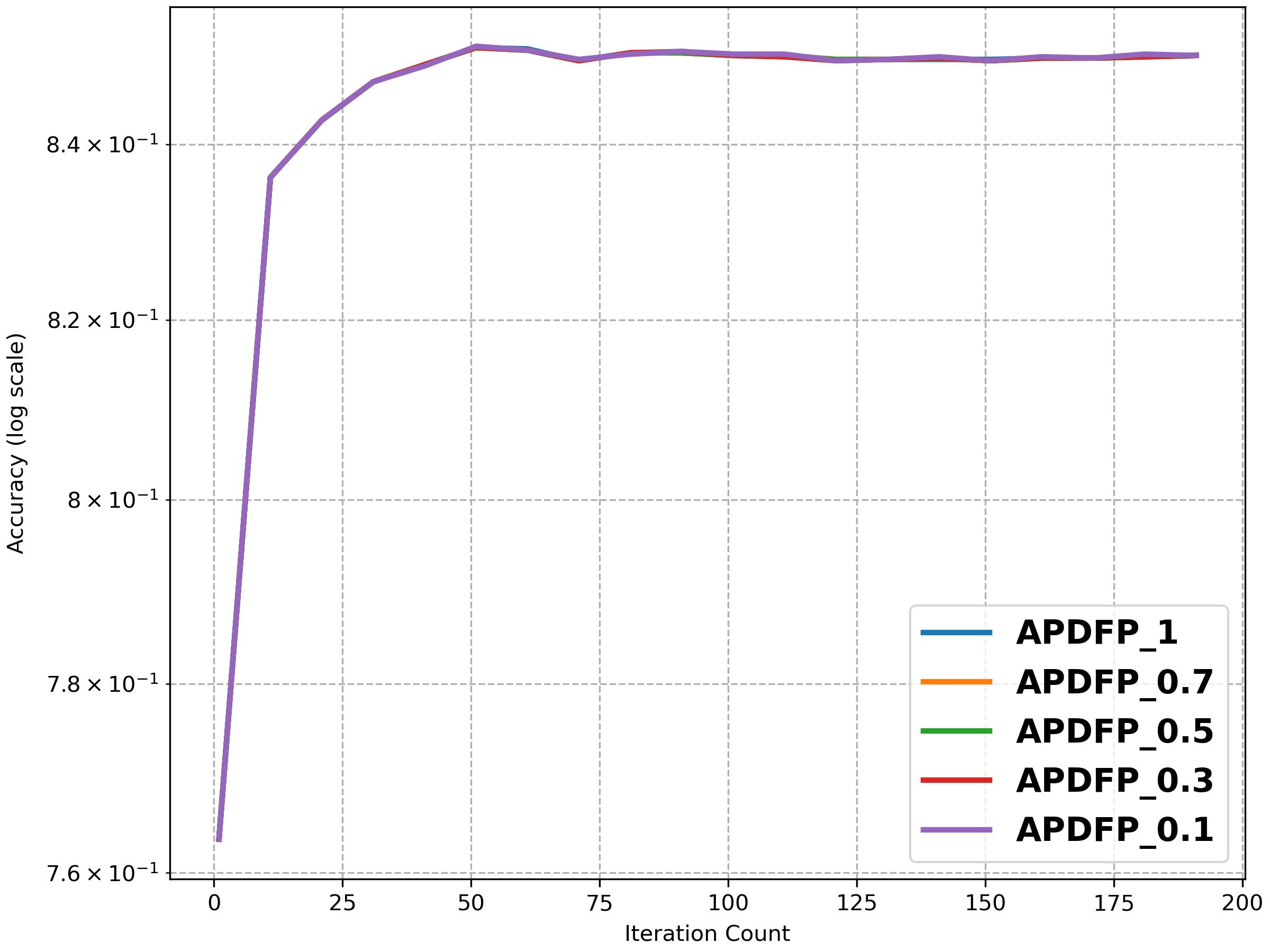}
}\\[-1mm]
\subfloat[mushrooms (train)]{
\includegraphics[width = 2.5 in ]{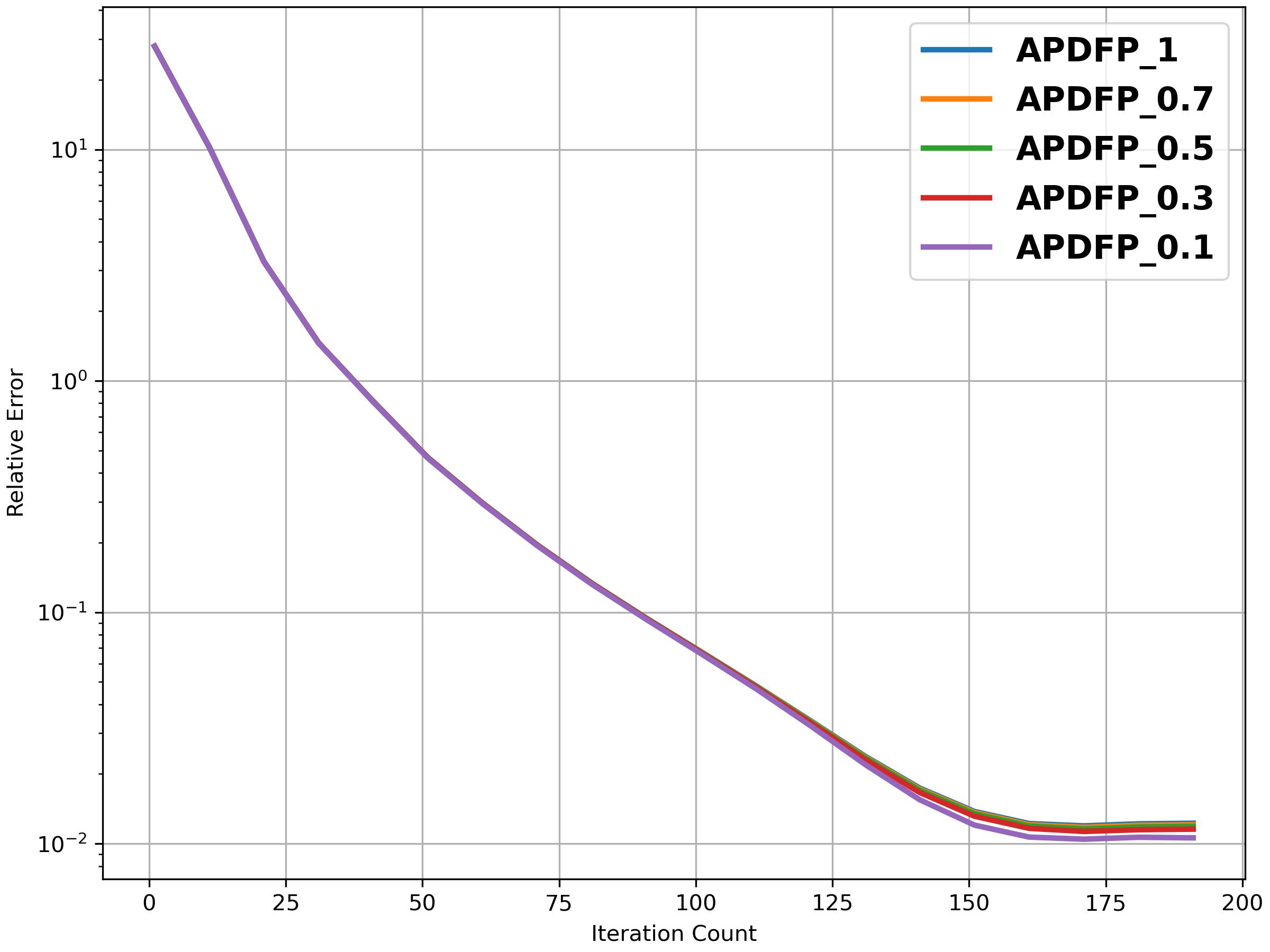}
}
\subfloat[mushrooms (accuracy) ]{
\includegraphics[width = 2.5 in]{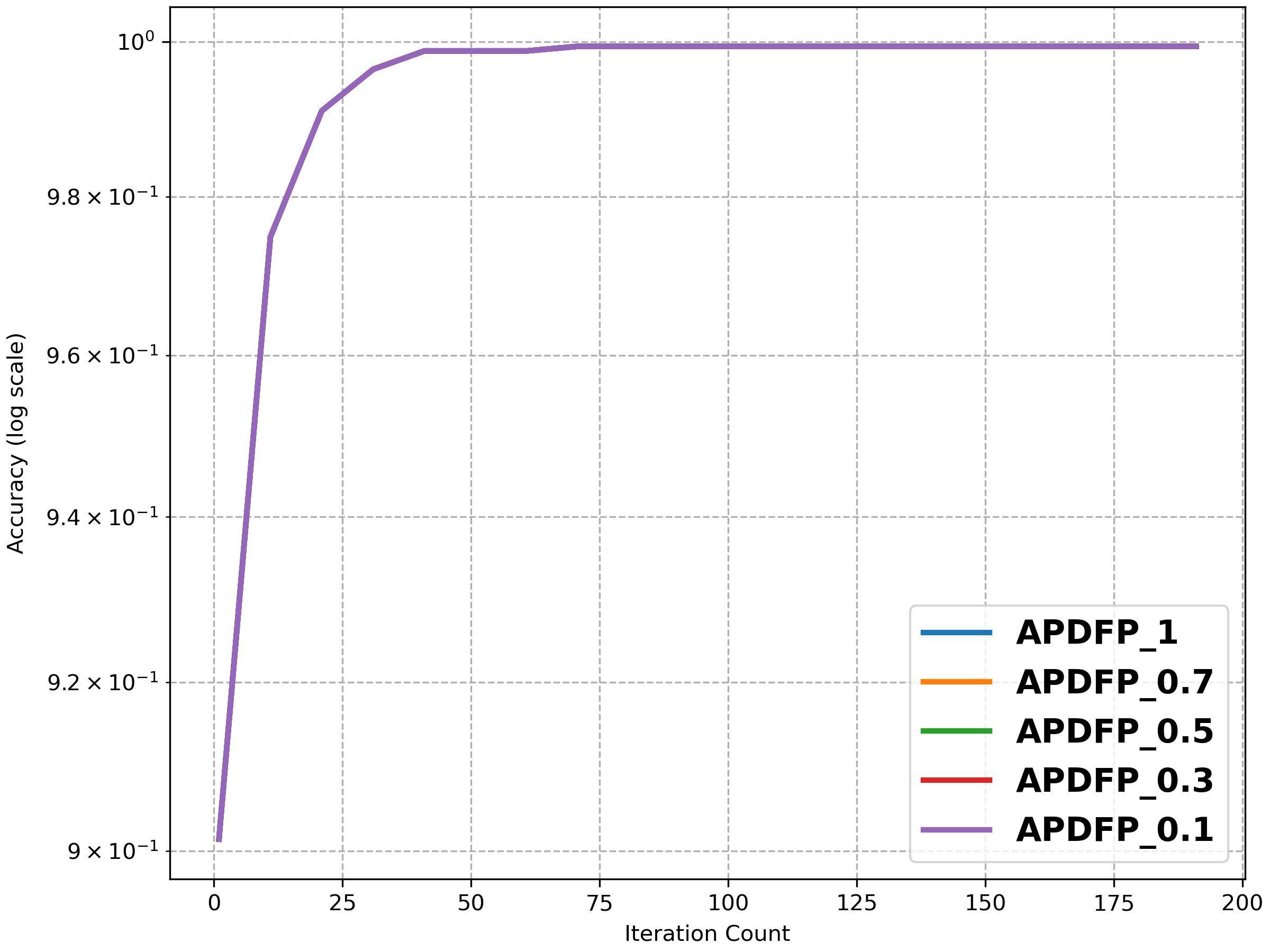}
}\\[-1mm]
\subfloat[w8a (train)]{
\includegraphics[width = 2.5 in ]{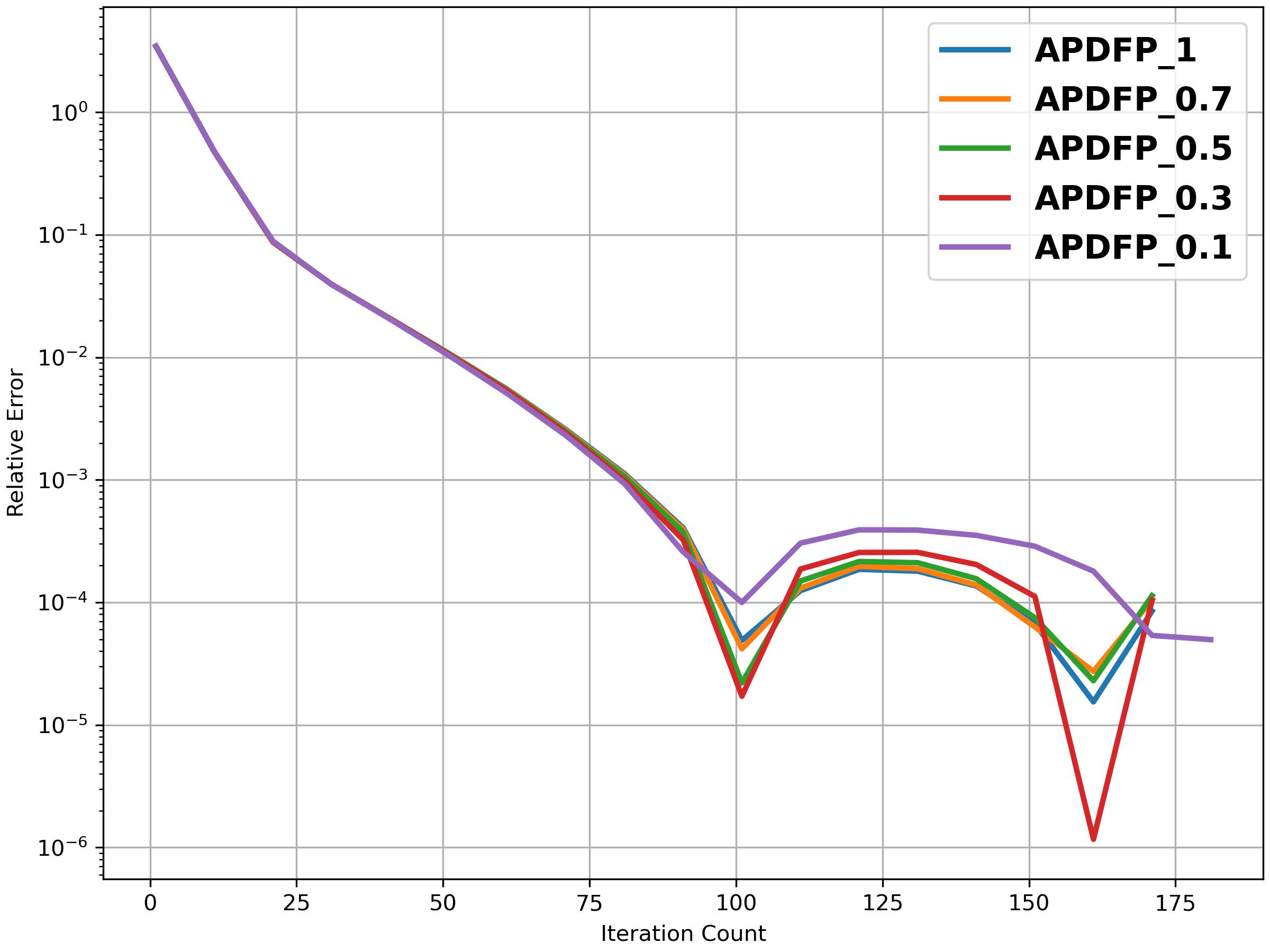}
}
\subfloat[w8a (accuracy) ]{
\includegraphics[width = 2.5 in]{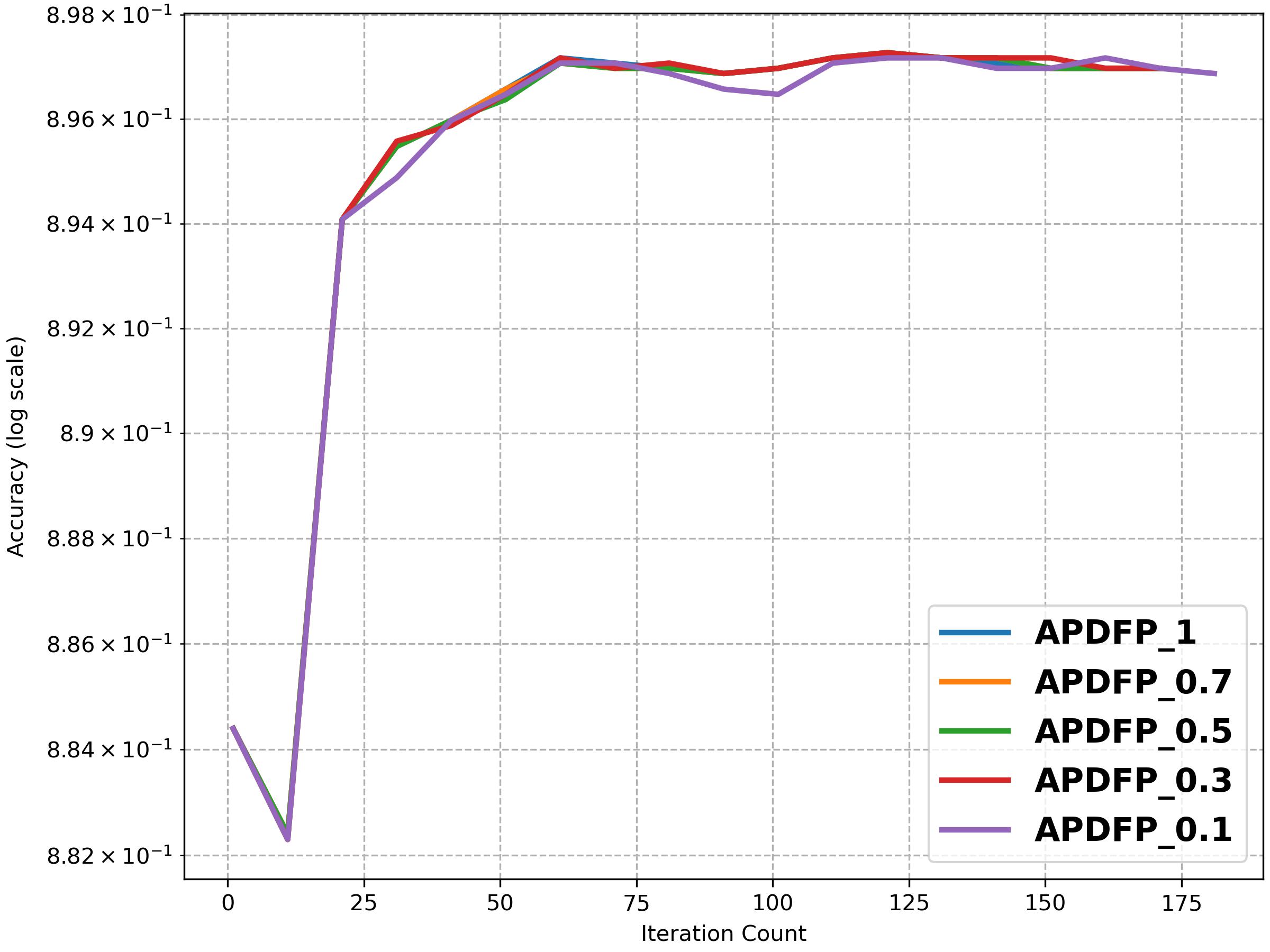}
}
\caption{The relative error of the objective function value and the testing accuracy with respect to the number of iterations for different values of $\lambda$ across various datasets.}
\label{GGLST-lam}
\end{figure}

\begin{figure}[htbp!]
\centering
\subfloat[function value]{
    \includegraphics[width = 2.5in]{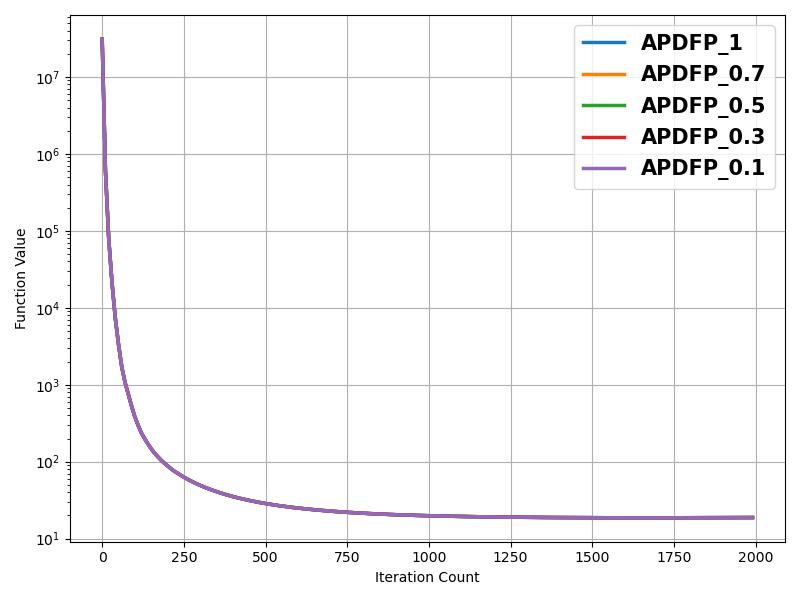}
}
\subfloat[PSNR]{
    \includegraphics[width = 2.5in]{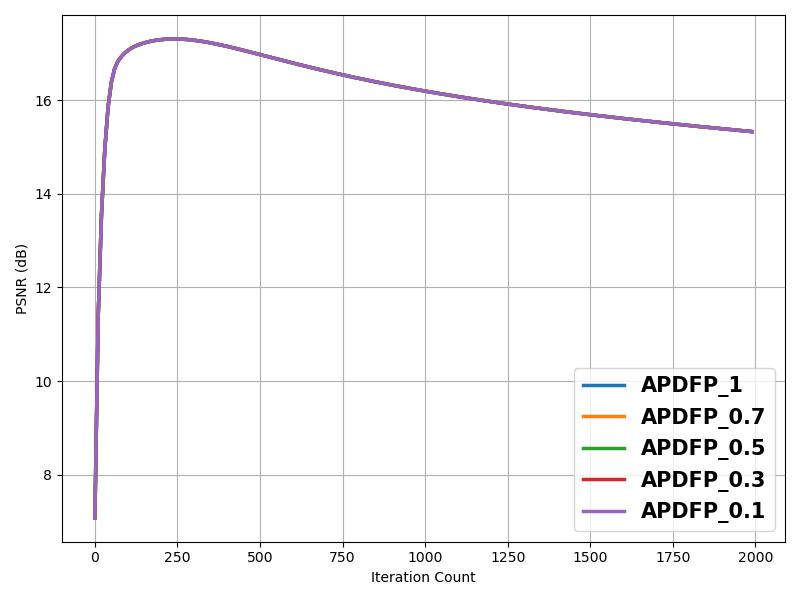}
}
\caption{The objective function value and PSNR across iterations for different values of~$\lambda$.}
\label{CT-2D-lam}
\end{figure}

\section{Conclusions.}\label{sec:conclusion}
In this work, we propose an accelerated primal--dual fixed point (APDFP) algorithm that incorporates Nesterov's acceleration technique into the primal--dual fixed point (PDFP) algoorithm to enhance convergence speed for problems in data and imaging sciences. The proposed APDFP algorithm features a simple update rule and minimal parameter tuning requirements, and can be viewed as a generalization of NAG (See Fig.~\ref{connection} for the connection between APDFP and the existing algorithms). From a theoretical perspective, we improve the convergence rate with respect to the Lipschitz constant \( L_f \) from \( \mathcal{O}(1/k) \) to \( \mathcal{O}(1/k^2) \). Numerical experiments on several examples, together with comparisons against state-of-the-art algorithms, demonstrate the correctness and effectiveness of the proposed method.
\begin{figure}[htbp!]
\centering
\includegraphics[width = 5in]{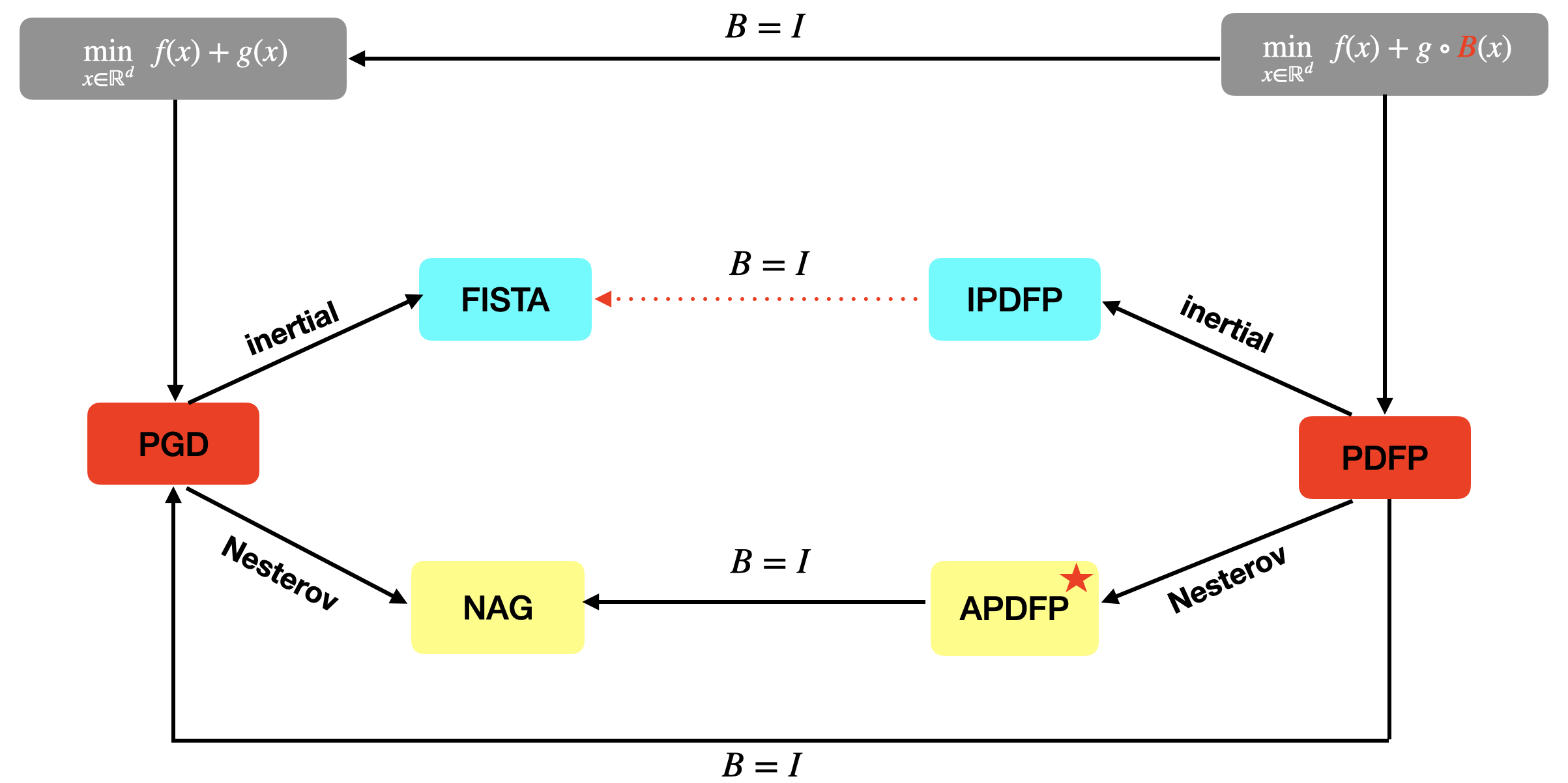}
\caption{The connection between PDFP and PGD variants. APDFP is the algorithm proposed in this work and it  generalizes NAG when the matrix $B = I$. The dash arrow between IPDFP and FISTA means that IPDFP degenerate to FISTA when $B = I$ but lack theoretical generalization of FISTA.}
\label{connection}
\end{figure}

\section{Appendix}\label{sec:appendix}

\begin{lemma}\label{lm1}
Let $z_{k}^{\text{ag}} = (x_{k}^{\text{ag}},y_{k}^{\text{ag}})$ be the iterate of Algorithm \ref{APDFP}, then for any $z = (x,y) \in \mathbb{R}^d \times Y$ the following equality holds
\begin{equation}\label{lm1eq1}
\begin{aligned}
& Q(z_{k+1}^{\text{ag}}, z) - (1 - \theta_k) Q(z_k^{\text{ag}}, z) \\
& = \Big[f(x_{k+1}^{\text{ag}}) - \theta_kf(x) - (1 - \theta_k)f(x_k^{\text{ag}})\Big] 
    + \Big[ g^*(y_{k+1}^{\text{ag}}) - \theta_kg^*(y) - (1 - \theta_k)g^*(y_k^{\text{ag}})\Big]  \\
&\qquad + \theta_k\langle B x_{k + 1}, y \rangle 
    - \theta_k\langle B x, y_{k + 1} \rangle \\
\end{aligned}    
\end{equation}
where $Q(\cdot,\cdot)$ is defined in the Eq. \textbf{(\ref{gap1})}.
\end{lemma}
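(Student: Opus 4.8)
The plan is to prove this identity by direct expansion of the definition of $Q(\cdot,\cdot)$ in Eq.~\eqref{pdg1}, followed by a careful grouping of terms, with the only nontrivial simplification arising from substituting the aggregate update rules into the bilinear coupling terms. No convexity, smoothness, or optimality property of the proximal steps is needed; this is a purely algebraic identity driven by the convex-combination structure of the aggregate sequences.

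First I would write out both $Q(z_{k+1}^{\text{ag}}, z)$ and $Q(z_k^{\text{ag}}, z)$ according to Eq.~\eqref{pdg1} and form the combination $Q(z_{k+1}^{\text{ag}}, z) - (1 - \theta_k)Q(z_k^{\text{ag}}, z)$. The terms attached to the fixed test point $z = (x,y)$ that appear in both copies of $Q$ — namely $-g^*(y)$ and $-f(x)$ — combine with the coefficient $1 - (1-\theta_k) = \theta_k$ to yield $-\theta_k g^*(y)$ and $-\theta_k f(x)$, matching the right-hand side. The pure function-value terms $f(x_{k+1}^{\text{ag}})$, $(1-\theta_k)f(x_k^{\text{ag}})$, $g^*(y_{k+1}^{\text{ag}})$, and $(1-\theta_k)g^*(y_k^{\text{ag}})$ carry over verbatim and assemble into the two bracketed expressions on the right of Eq.~\eqref{lm1eq1}.

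The main step is handling the four bilinear terms. Here I would substitute the aggregate updates $x_{k+1}^{\text{ag}} = (1-\theta_k)x_k^{\text{ag}} + \theta_k x_{k+1}$ (Step 6) and $y_{k+1}^{\text{ag}} = (1-\theta_k)y_k^{\text{ag}} + \theta_k y_{k+1}$ (Step 7) into $\langle B x_{k+1}^{\text{ag}}, y\rangle$ and $\langle Bx, y_{k+1}^{\text{ag}}\rangle$. Because these updates are affine convex combinations and $\langle B\cdot,\cdot\rangle$ is linear in each slot, the $(1-\theta_k)$-weighted piece of each aggregate exactly cancels the corresponding term coming from $-(1-\theta_k)Q(z_k^{\text{ag}}, z)$, leaving precisely $\theta_k\langle B x_{k+1}, y\rangle - \theta_k\langle Bx, y_{k+1}\rangle$, which is the last line of Eq.~\eqref{lm1eq1}.

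The entire content lies in the interaction between the linearity of the coupling term and the convex-combination structure of the aggregate sequences, so there is no genuine obstacle beyond bookkeeping. The only place demanding care is tracking the signs and the $(1-\theta_k)$ cancellations in the coupling terms; accordingly, I would verify explicitly that no residual aggregate-indexed bilinear term (in $x_k^{\text{ag}}$ or $y_k^{\text{ag}}$) survives, which confirms that the right-hand side depends on the generic iterates $x_{k+1}, y_{k+1}$ rather than the aggregates.
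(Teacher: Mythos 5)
Your proposal is correct and follows essentially the same route as the paper's proof: expand $Q$ via its definition, collect the $f$ and $g^*$ terms, and use the aggregate updates in Steps 6--7 together with bilinearity of $\langle B\cdot,\cdot\rangle$ so that $x_{k+1}^{\text{ag}} - (1-\theta_k)x_k^{\text{ag}} = \theta_k x_{k+1}$ (and likewise for $y$) reduces the coupling terms to $\theta_k\langle Bx_{k+1},y\rangle - \theta_k\langle Bx,y_{k+1}\rangle$. No gaps.
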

\begin{proof}
According to the definition of $Q(\tilde{z},z)$ in Eq. \textbf{(\ref{gap1})}, it follows that 
\begin{equation}\label{pdeq21}
\begin{aligned}
& Q(z_{k+1}^{\text{ag}}, z) - (1 - \theta_k) Q(z_k^{\text{ag}}, z) \\
&=  \Big\{ [f(x_{k+1}^{\text{ag}}) + \langle B x_{k+1}^{\text{ag}}, y \rangle - g^*(y)] 
    - [f(x) + \langle B x, y_{k+1}^{\text{ag}} \rangle - g^*(y_{k+1}^{\text{ag}})] \Big\} \\
&\quad - (1 - \theta_k) \Big\{ [f(x_k^{\text{ag}}) + \langle B x_k^{\text{ag}}, y \rangle - g^*(y)] 
    - [f(x) + \langle B x, y_k^{\text{ag}} \rangle - g^*(y_k^{\text{ag}})] \Big\} \\
& = \Big[f(x_{k+1}^{\text{ag}}) - \theta_kf(x) - (1 - \theta_k)f(x_k^{\text{ag}})\Big] 
    + \Big[ g^*(y_{k+1}^{\text{ag}}) - \theta_kg^*(y) - (1 - \theta_k)g^*(y_k^{\text{ag}})\Big]  \\
&\quad + \langle B ( x_{k+1}^{\text{ag}} - (1 - \theta_k) x_k^{\text{ag}}), y \rangle 
    - \langle B x, y_{k+1}^{\text{ag}} - (1 - \theta_k) y_k^{\text{ag}} \rangle \\
& = \Big[f(x_{k+1}^{\text{ag}}) - \theta_kf(x) - (1 - \theta_k)f(x_k^{\text{ag}})\Big] 
    + \Big[ g^*(y_{k+1}^{\text{ag}}) - \theta_kg^*(y) - (1 - \theta_k)g^*(y_k^{\text{ag}})\Big]  \\
&\qquad + \theta_k\langle B x_{k + 1}, y \rangle 
    - \theta_k\langle B x, y_{k + 1} \rangle \\
\end{aligned}    
\end{equation}  
where the last equation uses the definition of $x_{k + 1}^{\text{ag}},y_{k + 1}^{\text{ag}}$ in step 6,7 of Algorithm \ref{APDFP}.
\end{proof}
The Lemma~\textbf{\ref{lm1}} provides the general form of the energy function which the accelerated algorithms aim to estimate. In the following two lemmas, we present detailed estimates for APDFP.
\begin{lemma}\label{lm2}
Suppose the function $f$ is $L_f$ smooth convex function and $g$ is a convex Lipchitz continuous. Choose the parameter $0 < \lambda \leq \frac{1}{\rho_{\max}(BB^T)}$, let $x_{k + 1}^{\text{ag}},y_{k + 1}^{\text{ag}}$ be the aggregate iterate in Algorithm \ref{APDFP}, then for any $z = (x,y) \in \mathbb{R}^d \times Y$ the following inequality holds
\begin{equation}\label{lm2eq1}
\begin{aligned}
& f(x_{k + 1}^{\text{ag}}) - (1 - \theta_k)f(x_k^{\text{ag}}) - \theta_kf(x) + g^*(y_{k + 1}^{\text{ag}}) -  (1 - \theta_k) g^*(y_{k}^{\text{ag}}) -\theta_k g^*(y) \\
& \leq \theta_k(x - x_{k+1})^TB^Ty_{k + 1} - \theta_k(y - y_{k + 1})^TB x_{k + 1} \\
& \qquad + \frac{\theta_k}{2\gamma_k}(\|x_{k} - x\|_2^2 - \|x_{k + 1} - x\|_2^2 - \|x_{k + 1} - x_{k}\|_2^2) + \frac{\theta_k^2L_f}{2}\|x_{k + 1} - x_{k}\|_2^2 \\
& \qquad + \frac{\theta_k}{2}(\|y_k - y\|_M^2 - \|y_{k + 1} - y\|_M^2 - \|y_{k + 1} - y_k\|_M^2) \\
\end{aligned}    
\end{equation}
\end{lemma}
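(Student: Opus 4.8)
The plan is to split the left-hand side into an $f$-part and a $g^*$-part, bound each separately, and then recombine the resulting cross terms. For the $f$-part, the idea is to linearize $f$ at the midpoint $x_k^{\text{md}}$: I would apply the descent lemma to the aggregated point $x_{k+1}^{\text{ag}}$ and the convexity (subgradient) lower bounds to $f(x_k^{\text{ag}})$ and $f(x)$, namely
\begin{equation*}
\begin{aligned}
f(x_{k+1}^{\text{ag}}) &\le f(x_k^{\text{md}}) + \langle \nabla f(x_k^{\text{md}}), x_{k+1}^{\text{ag}} - x_k^{\text{md}}\rangle + \tfrac{L_f}{2}\|x_{k+1}^{\text{ag}} - x_k^{\text{md}}\|_2^2, \\
f(x_k^{\text{ag}}) &\ge f(x_k^{\text{md}}) + \langle \nabla f(x_k^{\text{md}}), x_k^{\text{ag}} - x_k^{\text{md}}\rangle, \\
f(x) &\ge f(x_k^{\text{md}}) + \langle \nabla f(x_k^{\text{md}}), x - x_k^{\text{md}}\rangle.
\end{aligned}
\end{equation*}

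Forming $f(x_{k+1}^{\text{ag}}) - (1-\theta_k)f(x_k^{\text{ag}}) - \theta_k f(x)$ from these three estimates, the constant $f(x_k^{\text{md}})$ contributions cancel because $1 - (1-\theta_k) - \theta_k = 0$. Since $x_k^{\text{md}} = (1-\theta_k)x_k^{\text{ag}} + \theta_k x_k$ and $x_{k+1}^{\text{ag}} - x_k^{\text{md}} = \theta_k(x_{k+1}-x_k)$, the surviving inner-product terms telescope into $\theta_k\langle \nabla f(x_k^{\text{md}}), x_{k+1}-x\rangle$, while the quadratic remainder is exactly $\tfrac{\theta_k^2 L_f}{2}\|x_{k+1}-x_k\|_2^2$. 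I would then eliminate the gradient using the primal update of Algorithm~\ref{APDFP}, which yields $\nabla f(x_k^{\text{md}}) = \tfrac{\theta_k}{\gamma_k}(x_k - x_{k+1}) - B^T y_{k+1}$, so that $\theta_k\langle \nabla f(x_k^{\text{md}}), x_{k+1}-x\rangle$ splits into the cross term $\theta_k(x-x_{k+1})^T B^T y_{k+1}$ and a scalar multiple of $\langle x_k - x_{k+1}, x_{k+1}-x\rangle$; applying the polarization identity $\langle a-b, b-c\rangle = \tfrac12(\|a-c\|_2^2 - \|a-b\|_2^2 - \|b-c\|_2^2)$ to the latter produces the primal distance combination $\|x_k - x\|_2^2 - \|x_{k+1}-x\|_2^2 - \|x_{k+1}-x_k\|_2^2$ appearing on the right-hand side.

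For the $g^*$-part, convexity (Jensen) applied to $y_{k+1}^{\text{ag}} = (1-\theta_k)y_k^{\text{ag}} + \theta_k y_{k+1}$ reduces $g^*(y_{k+1}^{\text{ag}}) - (1-\theta_k)g^*(y_k^{\text{ag}}) - \theta_k g^*(y)$ to $\theta_k\big(g^*(y_{k+1}) - g^*(y)\big)$. I would then read off the first-order optimality condition of the proximal step defining $y_{k+1}$, which gives $B\overline{x}_k + \tfrac{\gamma_k}{\lambda\theta_k}(y_k - y_{k+1}) \in \partial g^*(y_{k+1})$, and invoke the subgradient inequality at $y$ to bound $g^*(y_{k+1}) - g^*(y)$ by $\langle B\overline{x}_k, y_{k+1}-y\rangle + \tfrac{\gamma_k}{\lambda\theta_k}\langle y_k - y_{k+1}, y_{k+1}-y\rangle$.

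The final and most delicate step is to reconcile $\overline{x}_k$ with $x_{k+1}$. Subtracting the two updates gives $\overline{x}_k - x_{k+1} = \tfrac{\gamma_k}{\theta_k}B^T(y_{k+1}-y_k)$, so multiplying the previous bound by $\theta_k$ turns $\theta_k\langle B\overline{x}_k, y_{k+1}-y\rangle$ into the desired cross term $-\theta_k(y - y_{k+1})^T B x_{k+1}$ together with a correction $\gamma_k\langle BB^T(y_{k+1}-y_k), y_{k+1}-y\rangle$. I expect the main obstacle to be showing that this $BB^T$ correction merges with the Euclidean proximal terms $\tfrac{\gamma_k}{2\lambda}\big(\|y_k-y\|_2^2 - \|y_{k+1}-y\|_2^2 - \|y_{k+1}-y_k\|_2^2\big)$ into a single three-point identity in the seminorm induced by $M := \tfrac{\gamma_k}{\lambda\theta_k}(I - \lambda BB^T)$; verifying this requires expanding the $M$-quadratics and checking that the cross term $-2\langle BB^T(y_{k+1}-y_k), y_{k+1}-y\rangle$ emerges with the correct sign and magnitude, and it is precisely here that the hypothesis $\lambda \le 1/\rho_{\max}(BB^T)$ enters, guaranteeing $M \succeq 0$ so that $\|\cdot\|_M$ is a genuine seminorm. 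Summing the $f$-part and $g^*$-part bounds then assembles the claimed inequality.
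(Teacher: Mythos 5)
Your proposal is correct and follows essentially the same route as the paper's proof: descent lemma plus convexity at $x_k^{\text{md}}$ combined with the primal optimality condition and the three-point identity for the $f$-part, and Jensen plus the prox optimality condition with the substitution $\overline{x}_k - x_{k+1} = \tfrac{\gamma_k}{\theta_k}B^T(y_{k+1}-y_k)$ folding the $BB^T$ correction into the $(I-\lambda BB^T)$-seminorm for the $g^*$-part. Note that your derivation (like the paper's own proof and Lemma~\ref{lm3}) actually produces the coefficients $\tfrac{\theta_k^2}{2\gamma_k}$ on the primal distances and $\tfrac{\gamma_k}{2}$ on the dual $M$-norm terms with $M=\tfrac{I-\lambda BB^T}{\lambda}$, rather than the $\tfrac{\theta_k}{2\gamma_k}$ and $\tfrac{\theta_k}{2}$ printed in the lemma statement, which appear to be typos.
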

where $M = \frac{I - \lambda BB^T}{\lambda}$.
\begin{proof}
By the $x$ update in Algorithm \ref{APDFP}, we have for any $x \in \mathbb{R}^d$, the following equation holds
\begin{equation}\label{primaleq1}
\begin{aligned}
& (x - x_{k + 1})^T(\nabla f(x_{k}^{\text{md}}) + B^Ty_{k + 1} + \frac{\theta_k}{\gamma_k}(x_{k + 1} - x_k)) = 0\\      
\Leftrightarrow 
& (\theta_k x - \theta_kx_{k + 1})^T\nabla f(x_{k}^{\text{md}}) +(\theta_k x - \theta_kx_{k + 1})^T( B^Ty_{k + 1} + \frac{\theta_k}{\gamma_k}(x_{k + 1} - x_k)) = 0\\  
\end{aligned}    
\end{equation}
The first term on the left side of the second equation of \textbf{(\ref{primaleq1})} can be estimated by
\begin{equation}\label{primaleq2}
\begin{aligned}
& (\theta_k x - \theta_kx_{k + 1})^T\nabla f(x_{k}^{\text{md}}) \\
& = (\theta_k x + (1 - \theta_k) x_k^{\text{ag}}  - ((1 - \theta_k) x_k^{\text{ag}} + \theta_kx_{k + 1}))^T\nabla f(x_{k}^{\text{md}}) \\
& \overset{\tiny{\circled{1}}}{=}  (\theta_k x + (1 - \theta_k) x_k^{\text{ag}}  - x_{k + 1}^{\text{ag}})^T\nabla f(x_{k}^{\text{md}}) \\
& \overset{\tiny{\circled{2}}}{\leq}  f(\theta_k x + (1 - \theta_k)x_k^{\text{ag}}) - f(x_{k + 1}^{\text{ag}}) + \frac{L_f}{2}\|x_{k + 1}^{\text{ag}} - x_{k}^{\text{md}}\|_2^2 \\
& \overset{\tiny{\circled{3}}}{\leq}  \theta_kf(x) + (1 - \theta_k)f(x_k^{\text{ag}}) - f(x_{k + 1}^{\text{ag}}) + \frac{L_f}{2}\|x_{k + 1}^{\text{ag}} - x_{k}^{\text{md}}\|_2^2 \\
& \overset{\tiny{\circled{4}}}{=}  \theta_kf(x) + (1 - \theta_k)f(x_k^{\text{ag}}) - f(x_{k + 1}^{\text{ag}}) + \frac{\theta_k^2L_f}{2}\|x_{k + 1} - x_{k}\|_2^2 \\
\end{aligned}    
\end{equation}
where 
\begin{itemize}
    \item $\small{\circled{1}}$ uses the definition of $x_{k + 1}^{\text{ag}}$ (step 6 of Algorithm \ref{APDFP}).
    \item $\small{\circled{2}}$ follows from convexity and smoothness of $f$ (descent lemma  \cite{Bookbeck}).
    \item $\small{\circled{3}}$ uses convexity of $f$.
    \item $\small{\circled{4}}$ uses the definition of $x_{k + 1}^{\text{ag}}$ and $x_{k}^{\text{md}}$ (step 2 and 6 of Algorithm \ref{APDFP}).
\end{itemize}
Substituting Eq. \textbf{(\ref{primaleq2})} into Eq. \textbf{(\ref{primaleq1})} yields
\begin{equation}\label{primaleq3}
\begin{aligned}
f(x_{k + 1}^{\text{ag}}) 
& \leq  \theta_kf(x) + (1 - \theta_k)f(x_k^{\text{ag}}) + \frac{\theta_k^2L_f}{2}\|x_{k + 1} - x_{k}\|_2^2+ \theta_k(x - x_{k+1})^TB^Ty_{k + 1} \\
& \qquad + \frac{\theta_k^2}{2\gamma_k}(\|x_{k} - x\|_2^2 - \|x_{k + 1} - x\|_2^2 - \|x_{k + 1} - x_{k}\|_2^2)   
\end{aligned}    
\end{equation}
By the first optimality condition for the $y$ update in Algorithm \ref{APDFP}, one has for any $y \in Y$ and $g^{'}(y_{k + 1}) \in \partial g^*(y_{k + 1})$
\begin{equation}\label{dualeq1}
\begin{aligned}
& (y - y_{k + 1})^T(\frac{\lambda}{\gamma_k}\theta_kg'(y_{k + 1})- \frac{\lambda}{\gamma_k}\theta_kB\overline{x}_{k} + (y_{k + 1} - y_k)) \geq 0, \\
\Leftrightarrow
& (y - y_{k + 1})^T(\frac{\lambda}{\gamma_k}\theta_kg'(y_{k + 1}) - \frac{\lambda}{\gamma_k}\theta_kB(x_{k + 1} + \frac{\gamma_k}{\theta_k} B^T (y_{k + 1} - y_k)) + (y_{k + 1} - y_k)) \geq 0\\
\Leftrightarrow
& (y - y_{k + 1})^T(\frac{\lambda}{\gamma_k}\theta_kg'(y_{k + 1}) - \frac{\lambda}{\gamma_k}\theta_kBx_{k + 1} + (I - \lambda BB^T)(y_{k + 1} - y_k)) \geq 0\\
\Leftrightarrow
& (\theta_ky - \theta_ky_{k + 1})^Tg'(y_{k + 1}) - \theta_k(y - y_{k + 1})^TBx_{k + 1} + \gamma_k(y - y_{k + 1})^TM(y_{k + 1} - y_k) \geq 0\\ 
\end{aligned}    
\end{equation}
where the second inequality uses the definition of $\overline{x}_{k}$ and $x_{k + 1}$ (step 3 and 5 of Algorithm \ref{APDFP}) and the last equation uses the definition $M = \frac{I - \lambda BB^T}{\lambda}$. \\
The first term of the last inequality in \textbf{(\ref{dualeq1})} is estimated by 
\begin{equation}\label{dualeq2}
\begin{aligned}
& (\theta_k y  - \theta_ky_{k + 1}))^Tg'(y_{k + 1}) \\
& \leq \theta_k g^*(y) - \theta_k g^*(y_{k + 1}) \\
& = \theta_k g^*(y) + (1 - \theta_k) g^*(y_{k}^{\text{ag}}) -( (1 - \theta_k) g^*(y_{k}^{\text{ag}}) + \theta_k g^*(y_{k + 1})) \\
& \leq \theta_k g^*(y) + (1 - \theta_k) g^*(y_{k}^{\text{ag}}) - g^*(y_{k + 1}^{\text{ag}})
\end{aligned}    
\end{equation}
where the first inequality uses the convexity of $g^*$ and the last inequality follows from the convexity of $g^*$ and the definition of $y_{k + 1}^{\textit{ag}}$ (step 7 of Algorithm \ref{APDFP}). \\
Taking Eq. \textbf{(\ref{dualeq2})} into Eq. \textbf{(\ref{dualeq1})}, one gets
\begin{equation}\label{dualeq3}
\begin{aligned}
g^*(y_{k + 1}^{\text{ag}})
& \leq \theta_k g^*(y) + (1 - \theta_k) g^*(y_{k}^{\text{ag}}) - \theta_k(y - y_{k + 1})^TBx_{k + 1} + \gamma_k(y - y_{k + 1})^TM(y_{k + 1} - y_k)) \\
& \leq \theta_k g^*(y) + (1 - \theta_k) g^*(y_{k}^{\text{ag}}) - \theta_k(y - y_{k + 1})^TBx_{k + 1} \\
& \qquad + \frac{\gamma_k}{2}(\|y - y_k\|_M^2 - \|y - y_{k + 1}\|_M^2 - \|y_k - y_{k + 1}\|_M^2) \\
\end{aligned}    
\end{equation}
Combining Eq. \textbf{(\ref{primaleq3})} and \textbf{(\ref{dualeq3})}, we arrive at
\begin{equation}\label{pdeq1}
\begin{aligned}
& f(x_{k + 1}^{\text{ag}}) - (1 - \theta_k)f(x_k^{\text{ag}}) - \theta_kf(x) + g^*(y_{k + 1}^{\text{ag}}) -  (1 - \theta_k) g^*(y_{k}^{\text{ag}}) -\theta_k g^*(y) \\
& \leq \theta_k(x - x_{k+1})^TB^Ty_{k + 1} - \theta_k(y - y_{k + 1})^TB x_{k + 1} \\
& \qquad + \frac{\theta_k^2}{2\gamma_k}(\|x_{k} - x\|_2^2 - \| x_{k + 1} - x\|_2^2 - \|x_{k + 1} - x_{k}\|_2^2) + \frac{\theta_k^2L_f}{2}\|x_{k + 1} - x_{k}\|_2^2 \\
& \qquad + \frac{\gamma_k}{2}(\|y_k - y\|_M^2 - \|y_{k + 1} - y\|_M^2 - \|y_{k + 1} - y_k\|_M^2) \\
\end{aligned}    
\end{equation}
\end{proof}

\begin{lemma}\label{lm3}
Suppose the function $f$ is $L_f$ smooth convex function and $g$ is a convex Lipchitz continuous. Choose the parameter $0 < \lambda \leq \frac{1}{\rho_{\max}(BB^T)}$, then the following estimate holds
\begin{equation}\label{lm3eq1}
\begin{aligned}
& Q(z_{k+1}^{\text{ag}}, z) - (1 - \theta_k) Q(z_k^{\text{ag}}, z) \\
& \leq \frac{\theta_k^2}{2\gamma_k}(\|x_{k} - x\|_2^2 - \|x_{k + 1} - x\|_2^2) + \frac{\theta_k^2}{2}\left(\frac{\gamma_kL_f - 1}{\gamma_k}\right)\|x_{k + 1} - x_{k}\|_2^2  \\
& \qquad + \frac{\gamma_k}{2}(\|y_k - y\|_M^2 - \|y_{k + 1} - y\|_M^2 - \|y_{k + 1} - y_k\|_M^2) \\
\end{aligned}    
\end{equation} 
where $M = \frac{I - \lambda BB^T}{\lambda}$.
\end{lemma}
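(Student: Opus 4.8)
The plan is to derive \eqref{lm3eq1} by feeding the inequality of Lemma~\ref{lm2} into the exact decomposition of Lemma~\ref{lm1}; the substance of the argument is the complete cancellation of the bilinear coupling terms, after which only quadratic remainders survive and are reassembled into the stated bound.

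First I would note that the two bracketed function differences on the right-hand side of Lemma~\ref{lm1}, namely $[f(x_{k+1}^{\text{ag}}) - \theta_k f(x) - (1-\theta_k)f(x_k^{\text{ag}})] + [g^*(y_{k+1}^{\text{ag}}) - \theta_k g^*(y) - (1-\theta_k)g^*(y_k^{\text{ag}})]$, are exactly the quantity estimated from above in Lemma~\ref{lm2} (equation~\eqref{pdeq1}). Substituting that estimate, the right-hand side of the Lemma~\ref{lm1} identity is replaced by the right-hand side of \eqref{pdeq1} plus the leftover coupling term $\theta_k\langle Bx_{k+1},y\rangle - \theta_k\langle Bx,y_{k+1}\rangle$ carried over from Lemma~\ref{lm1}.

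The key step is to show that all inner-product terms vanish. I would expand both coupling contributions using $\langle Bx_{k+1},y\rangle = y^T Bx_{k+1}$ and $\langle Bx,y_{k+1}\rangle = x^T B^T y_{k+1}$. The leftover term from Lemma~\ref{lm1} supplies $\theta_k y^T Bx_{k+1} - \theta_k x^T B^T y_{k+1}$, while the coupling term $\theta_k(x-x_{k+1})^T B^T y_{k+1} - \theta_k(y-y_{k+1})^T Bx_{k+1}$ from \eqref{pdeq1} supplies $\theta_k x^T B^T y_{k+1} - \theta_k x_{k+1}^T B^T y_{k+1} - \theta_k y^T Bx_{k+1} + \theta_k y_{k+1}^T Bx_{k+1}$. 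The $y^T Bx_{k+1}$ terms cancel, the $x^T B^T y_{k+1}$ terms cancel, and the residual pair $-\theta_k x_{k+1}^T B^T y_{k+1} + \theta_k y_{k+1}^T Bx_{k+1}$ vanishes by the scalar identity $x_{k+1}^T B^T y_{k+1} = y_{k+1}^T Bx_{k+1}$. Hence no bilinear term remains.

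Finally I would reassemble the purely quadratic remainder. The $y$-block $\frac{\gamma_k}{2}(\|y_k-y\|_M^2 - \|y_{k+1}-y\|_M^2 - \|y_{k+1}-y_k\|_M^2)$ and the primal distance difference $\frac{\theta_k^2}{2\gamma_k}(\|x_k-x\|_2^2 - \|x_{k+1}-x\|_2^2)$ carry over verbatim, while the two terms involving $\|x_{k+1}-x_k\|_2^2$, namely $-\frac{\theta_k^2}{2\gamma_k}\|x_{k+1}-x_k\|_2^2$ and $+\frac{\theta_k^2 L_f}{2}\|x_{k+1}-x_k\|_2^2$, combine into $\frac{\theta_k^2}{2}\,\frac{\gamma_k L_f - 1}{\gamma_k}\|x_{k+1}-x_k\|_2^2$, giving precisely \eqref{lm3eq1}. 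Since the cancellation is an exact algebraic identity requiring no additional parameter assumptions beyond those already invoked in Lemma~\ref{lm2}, I anticipate no genuine difficulty; the only point demanding care is the bookkeeping of the bilinear forms, where the symmetry $x_{k+1}^T B^T y_{k+1} = y_{k+1}^T Bx_{k+1}$ must be applied correctly.
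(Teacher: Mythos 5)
Your proposal is correct and follows essentially the same route as the paper's own proof: substitute the bound \eqref{pdeq1} from Lemma~\ref{lm2} into the identity \eqref{lm1eq1} of Lemma~\ref{lm1}, observe that all bilinear terms cancel exactly (including the residual pair via $x_{k+1}^T B^T y_{k+1} = y_{k+1}^T B x_{k+1}$), and merge the two $\|x_{k+1}-x_k\|_2^2$ terms into the coefficient $\frac{\theta_k^2}{2}\cdot\frac{\gamma_k L_f - 1}{\gamma_k}$. Your explicit bookkeeping of the cancellation is, if anything, more detailed than the paper's, which compresses it into a single displayed chain.
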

\begin{proof}
Combining Lemma \textbf{(\ref{lm1})} and \textbf{(\ref{lm2})}, one has
\begin{equation}
\begin{aligned}
& Q(z_{k+1}^{\text{ag}}, z) - (1 - \theta_k) Q(z_k^{\text{ag}}, z) \\
& \overset{\tiny{\circled{1}}}{=} \Big[f(x_{k+1}^{\text{ag}}) - \theta_kf(x) - (1 - \theta_k)f(x_k^{\text{ag}})\Big] 
    + \Big[ g^*(y_{k+1}^{\text{ag}}) - \theta_kg^*(y) - (1 - \theta_k)g^*(y_k^{\text{ag}})\Big]  \\
&\qquad + \theta_k\langle B x_{k + 1}, y \rangle 
    - \theta_k\langle B x, y_{k + 1} \rangle \\
& \overset{\tiny{\circled{2}}}{\leq}\theta_k(x - x_{k+1})^TB^Ty_{k + 1} - \theta_k(y - y_{k + 1})^TB x_{k + 1} + \theta_k\langle B x_{k + 1}, y \rangle 
    - \theta_k\langle B x, y_{k + 1} \rangle \\
& \qquad + \frac{\theta_k^2}{2\gamma_k}(\|x_{k} - x\|_2^2 - \| x_{k + 1} - x\|_2^2 - \|x_{k + 1} - x_{k}\|_2^2) + \frac{\theta_k^2L_f}{2}\|x_{k + 1} - x_{k}\|_2^2 \\
& \qquad + \frac{\gamma_k}{2}(\|y_k - y\|_M^2 - \|y_{k + 1} - y\|_M^2 - \|y_{k + 1} - y_{k}\|_M^2) \\
& = \frac{\theta_k^2}{2\gamma_k}(\|x_{k} - x\|_2^2 - \|x_{k + 1} - x\|_2^2) + \frac{\theta_k^2}{2}\left(\frac{\gamma_kL_f - 1}{\gamma_k}\right)\|x_{k + 1} - x_{k}\|_2^2 \\
& \qquad + \frac{\gamma_k}{2}(\|y_k - y\|_M^2 - \|y_{k + 1} - y\|_M^2 - \|y_{k + 1} - y_k\|_M^2) \\
\end{aligned}    
\end{equation}
where $\small{\circled{1}}$  follows from \textbf{(\ref{lm1eq1})} and $\small{\circled{2}}$  uses \textbf{(\ref{lm2eq1})}.
\end{proof}

\textbf{We are now in position to the prove the Theorem \textbf{\ref{thm1}}}.
\begin{proof}
\textbf{Step 1:(Boundedness of iterate)} Since $0 < \gamma_k \leq \frac{1}{L_f}$ and $0 < \lambda \leq \frac{1}{\rho_{\max}(BB^T)}$ ( which means the positive semi-definite of $M$), then the Eq. \textbf{(\ref{lm3eq1})} can be estimated by
\begin{equation}
\begin{aligned}
& Q(z_{k+1}^{\text{ag}}, z) - (1 - \theta_k) Q(z_k^{\text{ag}}, z) \\
& \leq \frac{\theta_k^2}{2\gamma_k}(\|x_{k} - x\|_2^2 - \|x_{k + 1} - x\|_2^2) + \frac{\gamma_k}{2}(\|y_k - y\|_M^2 - \|y_{k + 1} - y\|_M^2) \\
\end{aligned}    
\end{equation} 
Divide both side by $\frac{\theta_k^2}{2\gamma_k}$ yields
\begin{equation}\label{thm1eq1}
\begin{aligned}
\frac{2\gamma_k}{\theta_k^2} Q(z_{k+1}^{\text{ag}}, z) 
& \leq \frac{2(1 - \theta_k)}{\theta_k^2}\gamma_k Q(z_k^{\text{ag}}, z) + \|x_{k} - x\|_2^2 - \|x_{k + 1} - x\|_2^2  \\
& \qquad + \frac{\gamma_k^2}{\theta_k^2}(\|y_k - y\|_M^2 - \|y_{k + 1} - y\|_M^2) \\
& \leq \frac{2\gamma_{k -1}}{\theta_{k -1}^2} Q(z_k^{\text{ag}}, z) + \|x_{k} - x\|_2^2 - \|x_{k + 1} - x\|_2^2  \\
& \qquad + \frac{\gamma_k^2}{\theta_k^2}(\|y_k - y\|_M^2 - \|y_{k + 1 - y}\|_M^2) \\
\end{aligned}    
\end{equation}
where the second inequality follows from the second inequality of \textbf{(\ref{condition})}, i.e.,
\begin{equation}
\begin{aligned}
\frac{\gamma_{k}}{\gamma_{k - 1}} \leq \frac{\theta_{k}^2}{\theta_{k - 1}^2(1 - \theta_{k})}
\Leftrightarrow
\frac{2(1 - \theta_k)}{\theta_k^2}\gamma_k 
\leq \frac{2\gamma_{k - 1}}{\theta_{k - 1}^2} 
\end{aligned}    
\end{equation}
Sum Eq. \textbf{(\ref{thm1eq1})} from $1$ to $k$, let $z := \hat{z} = ( \hat{x}, \hat{y} )$ be the saddle point (which means $Q(z_{k+1}^{\text{ag}}, \hat{z}) \geq 0$), and observe $\theta_1 = 1$, it follows that 
\begin{equation}\label{thm1eq2}
\begin{aligned}
\|x_{k + 1} - \hat{x}\|_2^2  
& \leq \|x_{1} - \hat{x}\|_2^2 +
\sum_{i = 1}^{k}\frac{\gamma_i^2}{\theta_i^2}(\|y_i - \hat{y}\|_M^2 - \|y_{i + 1} - \hat{y}\|_M^2) \\
& = \|x_{1} - \hat{x}\|_2^2 + \frac{\gamma_1^2}{\theta_1^2}\|y_1 - \hat{y}\|_M^2+
\sum_{i = 1}^{k - 1}\Big(\frac{\gamma_{i + 1}^2}{\theta_{i + 1}^2} - \frac{\gamma_i^2}{\theta_i^2}\Big)\|y_i - \hat{y}\|_M^2 - \frac{\gamma_{k}^2}{\theta_{k}^2}\|y_{k + 1} - \hat{y}\|_M^2 \\
\end{aligned}    
\end{equation}
Since $g$ is Lipchitz continuous the domain of $g^*$ is bounded (Corollary 13.3.3 in \cite{tyrrell1970convex}). Therefore, the $\| y_{i + 1} - \hat{y}\|_M^2$ is uniformly bounded (denote the bound as $\Omega(\hat{y})$).
Using the assumption that the $\frac{\gamma_i^2}{\theta_i^2}$ is increasing, then the Eq. \textbf{(\ref{thm1eq1})} can be rewritten as 
\begin{equation}
\begin{aligned}
\|\hat{x} - x_{k + 1}\|_2^2  
& \leq \|x_{1} - \hat{x}\|_2^2 + \frac{\gamma_1^2}{\theta_1^2}\| y_1 - \hat{y}\|_M^2+
\sum_{i = 1}^{k - 1}\left(\frac{\gamma_{i + 1}^2}{\theta_{i + 1}^2} - \frac{\gamma_i^2}{\theta_i^2}\right)\|y_i - \hat{y}\|_M^2 - \frac{\gamma_k^2}{\theta_k^2}\|y_{k + 1} - \hat{y}\|_M^2 \\
& \leq \|x_{1} - \hat{x}\|_2^2 + \frac{\gamma_1^2}{\theta_1^2}\Omega(\hat{y}) +
\sum_{i = 1}^{k - 1}\left(\frac{\gamma_{i + 1}^2}{\theta_{i + 1}^2} - \frac{\gamma_i^2}{\theta_i^2}\right)\Omega(\hat{y}) \\
& = \|x_{1} - \hat{x}\|_2^2 + \frac{\gamma_k^2}{\theta_k^2}\Omega(\hat{y}) \\
\end{aligned}    
\end{equation}
where the second inequality uses the monotonicity of $\frac{\gamma_k^2}{\theta_k^2}$. Since $\frac{\gamma_k^2}{\theta_k^2}$ is uniformly bounded, we obtain the boundedness of $x_k$. \\
\textbf{Step 2: (Convergence rate)} Firstly, define $\Upsilon_k$ recursively as follows
\begin{equation}\label{thm1eq3}
\left(\frac{1}{\theta_{k + 1}} - 1\right)\Upsilon_{k + 1} = \frac{\Upsilon_{k}}{\theta_{k}},\Upsilon_{1} = 1, k = 1,2,\cdots
\end{equation}
Multiply \textbf{(\ref{lm3eq1})} by $\frac{\Upsilon_k}{\theta_k}$ and use Eq. \textbf{(\ref{thm1eq3})}, one gets 
\begin{equation}\label{thm1eq4}
\begin{aligned}
&\left(\frac{1}{\theta_{k + 1}} - 1\right)\Upsilon_{k + 1}Q(z_{k+1}^{\text{ag}}, z) - \left(\frac{1}{\theta_k} - 1\right)\Upsilon_kQ(z_k^{\text{ag}}, z) \\
& \leq  \frac{\theta_k\Upsilon_k}{2\gamma_k}(\|x_{k} - x\|_2^2 - \|x_{k + 1} - x\|_2^2) + \frac{\Upsilon_k\theta_k}{2}\left(\frac{\gamma_kL_f - 1}{\gamma_k}\right)\|x_{k + 1} - x_{k}\|_2^2  \\
& \qquad + \frac{\gamma_k\Upsilon_k }{2\theta_k}(\|y_k - y\|_M^2 - \|y_{k + 1} - y\|_M^2 - \|y_{k + 1} - y_k\|_M^2) \\
& \leq \frac{\theta_k\Upsilon_k}{2\gamma_k}(\|x_{k} - x\|_2^2 - \| x_{k + 1} - x\|_2^2) + \frac{\gamma_k\Upsilon_k }{2\theta_k}(\| y_k - y\|_M^2 - \|y_{k + 1} - y\|_M^2) \\
\end{aligned}    
\end{equation}
where the last inequality follows from the fact $0 < \gamma_k \leq \frac{1}{L_f}$ and $0 < \lambda \leq \rho_{\max}(BB^T)$ which means the matrix $M$ is positive semi-definite. \\
Telescoping Eq. \textbf{(\ref{thm1eq4})} yields
\begin{equation}\label{thm1eq5}
\begin{aligned}
& \frac{\Upsilon_k}{\theta_k}Q(z_{k+1}^{\text{ag}}, z)\\
& = (\frac{1}{\theta_{k + 1}} - 1)\Upsilon_{k + 1}Q(z_{k+1}^{\text{ag}}, z) - (\frac{1}{\theta_1} - 1)\Upsilon_kQ(z_1^{\text{ag}}, z) \\
& \leq \frac{\theta_1\Upsilon_1}{2\gamma_1}\|x - x_{1}\|_2^2 - \frac{1}{2}\sum_{i = 1}^{k - 1}\left (\frac{\theta_i\Upsilon_i}{\gamma_i} - \frac{\theta_{i + 1}\Upsilon_{i + 1}}{\gamma_{i + 1}}\right)\|x - x_{i + 1}\|_2^2 - \frac{\theta_{k}\Upsilon_k}{\gamma_{k}}\|x - x_{k + 1}\|_2^2 \\
& \qquad +\frac{\gamma_1\Upsilon_1}{2\theta_1}\|y - y_1\|_M^2 -\frac{1}{2}\sum_{i = 1}^{k - 1}\left (\frac{\gamma_i\Upsilon_i}{\theta_i}- \frac{\gamma_{i + 1}\Upsilon_{i + 1}}{\theta_{i + 1}}\right )\|y - y_{i + 1}\|_M^2 - \frac{\gamma_{k}\Upsilon_{k}}{\theta_{k}}\|y - y_{k + 1}\|_M^2 \\
\end{aligned}    
\end{equation}
Recall the second and third inequality of \textbf{(\ref{condition})}, one gets 
\begin{equation}\label{thm1eq6}
\begin{aligned}
& \frac{\gamma_{i + 1}}{\gamma_i} \leq \frac{\theta_{i + 1}^2}{\theta_i^2(1 - \theta_{i + 1})}
\Leftrightarrow \frac{\theta_i\gamma_{i + 1}}{\theta_{i + 1}\gamma_i} \leq \frac{\theta_{i + 1}}{\theta_i(1 - \theta_{i + 1})} = \frac{\Upsilon_{i + 1}}{\Upsilon_{i}} 
\Leftrightarrow \frac{\theta_i\Upsilon_i}{\gamma_i} \leq \frac{\theta_{i + 1}\Upsilon_{i + 1}}{\gamma_{i + 1}}.\\
& \frac{\gamma_i}{\gamma_{i + 1}}\leq \frac{1}{(1 - \theta_{i + 1})} 
\Leftrightarrow
\frac{\gamma_i\theta_{i + 1}}{\gamma_{i + 1}\theta_i}\leq \frac{\theta_{i + 1}}{\theta_i(1 - \theta_{i + 1})} = \frac{\Upsilon_{i + 1}}{\Upsilon_{i}}
\Leftrightarrow \frac{\gamma_i\Upsilon_i}{\theta_i} \leq \frac{\gamma_{i + 1}\Upsilon_{i + 1}}{\theta_{i + 1}}.
\end{aligned}
\end{equation}
Since $x_i,y_i$ are uniformly bounded, for fixed $x,y$ we obtain the boundedness of $\|x_i - x\|_2^2$ and $\|y_{i} - y\|_2^2$ which are denoted by $\Omega(x)$ and $\Omega(y)$, then  \textbf{(\ref{thm1eq5})} is estimated as 
\begin{equation}
\begin{aligned}
&\frac{\Upsilon_k}{\theta_k}Q(z_{k+1}^{\text{ag}}, z) \\
& \leq \frac{\theta_1\Upsilon_1}{2\gamma_1}\Omega(x) - \frac{1}{2}\sum_{i = 1}^{k - 1}\left (\frac{\theta_i\Upsilon_i}{\gamma_i} - \frac{\theta_{i + 1}\Upsilon_{i + 1}}{\gamma_{i + 1}}\right)\Omega(x) - \frac{\theta_{k}\Upsilon_k}{\gamma_{k}}\|x - x_{k + 1}\|_2^2 \\
& \qquad +\frac{\gamma_1\Upsilon_1}{2\theta_1}\frac{\rho_{\max}(I - \lambda BB^T)}{\lambda}\Omega(y) -\frac{1}{2}\sum_{i = 1}^{k - 1}\left (\frac{\gamma_i\Upsilon_i}{\theta_i}- \frac{\gamma_{i + 1}\Upsilon_{i + 1}}{\theta_{i + 1}}\right )\frac{\rho_{\max}(I - \lambda BB^T)}{\lambda}\Omega(y) \\
& \qquad - \frac{\gamma_{k}\Upsilon_{k}}{\theta_{k}}\|y - y_{k + 1}\|_M^2 \\
& \leq \frac{\theta_k\Upsilon_k}{2\gamma_k}\Omega(x) - \frac{\theta_{k}\Upsilon_k}{\gamma_{k}}\|x - x_{k + 1}\|_2^2 
+\frac{\gamma_k\Upsilon_k}{2\theta_k}\frac{\rho_{\max}(I - \lambda BB^T)}{\lambda}\Omega(y) - \frac{\gamma_{k}\Upsilon_{k}}{\theta_{k}}\|y - y_{k + 1}\|_M^2 \\
& \leq \frac{\theta_k\Upsilon_k}{2\gamma_k}\Omega(x) 
+\frac{\gamma_k\Upsilon_k}{2\theta_k}\frac{\rho_{\max}(I - \lambda BB^T)}{\lambda}\Omega(y) \\
\Leftrightarrow 
& Q(z_{k+1}^{\text{ag}}, z)
\leq \frac{\theta_k^2}{2\gamma_k}\Omega(x) 
+\frac{\gamma_k}{2}\frac{\rho_{\max}(I - \lambda BB^T)}{\lambda}\Omega(y)
\end{aligned}
\end{equation}
where the second inequality uses the Eq. \textbf{(\ref{thm1eq6})}. \\
Taking a maximum over $z = (x,y) \in B_1 \times B_2$ yields
\begin{equation}
\begin{aligned}
\mathcal{G}_{B_1 \times B_2}(x_{k+1}^{\text{ag}},y_{k+1}^{\text{ag}})
& = \max_{z = (x,y) \in B_1 \times B_2}Q(z_{k+1}^{\text{ag}}, z)
\leq \frac{\theta_k^2}{2\gamma_k}\Omega_{1} 
+\frac{\gamma_k}{2}\frac{\rho_{\max}(I - \lambda BB^T)}{\lambda}\Omega_{2}.  
\end{aligned}
\end{equation}
where $\Omega_{1} = \max_{x \in B_1}(\Omega(x)),\Omega_{2} = \max_{y \in B_2}(\Omega(y))$ whose values are related to the  diameter of $B_1$ and $B_2$, respectively. 
\end{proof}

\textbf{Proof of Corollary \ref{col1}}.
\begin{proof}
It is clear that the $0 < \gamma_k < \frac{1}{L_f}$ and $\theta_1 = 1$. For the second inequality in \textbf{(\ref{condition})}, it follows that
\begin{equation}
\begin{aligned}
\frac{\gamma_{k + 1}}{\gamma_k}
& \leq \frac{\theta_{k + 1}^2}{\theta_k^2(1 - \theta_{k + 1})} \\
\Leftrightarrow
\frac{L_f + ck}{L_f + c(k+1)}
& \leq \frac{(k + 1)^2}{(k + 2)^2}\frac{k + 2}{k}
= \frac{(k + 1)^2}{(k + 2)k} \\
\Leftrightarrow
(L_f + ck)(k + 2)k & \leq (k + 1)^2(L_f + c(k + 1)) \\
\Leftrightarrow
L_f(k + 2)k + ck^2(k + 2) & \leq L_f(k + 1)^2 + c(k + 1)^3 \\
\Leftrightarrow
0 & \leq L_f + c(k^2 + 3k + 1) \\
\Leftrightarrow & \mathrm{trival}.\\
\end{aligned}    
\end{equation}
The third inequality in \textbf{(\ref{condition})} is equivalent to 
\begin{equation}
\begin{aligned}
\frac{\gamma_k}{\gamma_{k + 1}}
& \leq \frac{1}{(1 - \theta_{k + 1})} \\
\Leftrightarrow
\frac{L_f + c(k + 1)}{L_f + ck}
& \leq \frac{k + 2}{k} \\
\Leftrightarrow
L_f k + ck(k + 1)
& \leq (k + 2)L_f + ck(k + 2) \\
\Leftrightarrow
0
& \leq 2L_f + ck \\
\Leftrightarrow & \mathrm{trival}.\\
\end{aligned}    
\end{equation}
Let us prove the increasing property of  $\frac{\gamma_k^2}{\theta_k^2} = \frac{(k + 1)^2}{4(L_f + ck)^2}$.  \\
Define $h(x) =  \frac{(x + 1)^2}{4(L_f + cx)^2}$. Taking the derivative of $h$ yields,
\begin{equation}
h'(x) = \frac{(x + 1)(L_f - c)}{2(L_f + c x)^3}    
\end{equation}
Since $L_f > c  \rightarrow  h^{'}(x) > 0$, therefore $\frac{\gamma_k^2}{\theta_k^2}$ is increasing. It is readily to verify that the limit of $\frac{\gamma_k^2}{\theta_k^2}$ exists as $k \to \infty$, therefore it is bounded.\\
Thus, the parameters $\theta_k,\gamma_k$ satisfies the condition in \textbf{(\ref{condition})}. \\
Plug the parameters in the Eq. \textbf{(\ref{estimate})}, we obtain the results.
\end{proof}

\normalem
\bibliographystyle{siam}
\bibliography{ref}
\end{document}